\newcommand{\ovl}{\overline}
\newcommand{\vp}{\varepsilon}
\newcommand{\Hs}{\mathcal H}
\newcommand{\Ks}{\mathcal K}
\numberwithin{equation}{section}
\theoremstyle{plain}
\newtheorem{lem}{Lemma}[section]
\newtheorem{thm}[lem]{Theorem}
\newtheorem{cor}[lem]{Corollary}
\theoremstyle{definition}
\newtheorem{definition}[lem]{Definition}
\theoremstyle{remark}
\title{C$^*$-algebras nearly contained in type $\mathrm{I}$ algebras}
\author[E.~Christensen]{Erik Christensen}
\address{\hskip-\parindent
Erik Christensen, Institute for Mathematiske Fag, University of Copenhagen, Copenhagen, Demark.}
\email{echris@math.ku.dk}
\author[A.~M.~Sinclair]{Allan M.~Sinclair}
\address{\hskip-\parindent
Allan M.~Sinclair, School of Mathematics, University of Edinburgh, JCMB, King's Buildings, Mayfield Road, Edinburgh, EH9 3JZ, Scotland.}
\email{a.sinclair@ed.ac.uk}
\author[R.~R.~Smith]{Roger R.~Smith}
\address{\hskip-\parindent
Roger R.~Smith, Department of Mathematics, Texas A{\&}M University,
College Station TX 77843-3368, U.S.A.}
\email{rsmith@math.tamu.edu}
\author[S.~White]{Stuart White}
\address{\hskip-\parindent
Stuart White, School of Mathematics and Statistics, University of Glasgow, 
University Gardens, Glasgow Q12 8QW, Scotland.}
\email{stuart.white@glasgow.ac.uk}
\keywords{C$^*$-algebras; Near inclusions; Perturbations; Type I C$^*$-algebras; Similarity Problem}
\subjclass[2010]{Primary 46L05; 46L45}
\date{\today}
\begin{document}

\maketitle

\begin{abstract}
In this paper we consider near inclusions $A\subseteq_\gamma B$ of C$^*$-algebras. We show that if $B$ is a separable type $\mathrm{I}$ C$^*$-algebra and $A$ satisfies Kadison's similarity problem, then $A$ is also type $\mathrm{I}$ and use this to obtain an embedding of $A$ into $B$.
\end{abstract}

\section{Introduction}
This paper is concerned with obtaining embeddings from near inclusions of C$^*$-algebras.  Given two C$^*$-algebras $A$ and $B$ concretely represented on the same Hilbert space, we say that $A$ is nearly contained in $B$ if every element of the unit ball of $A$ can be well approximated by an operator from $B$ (see Definition \ref{DEFNI} below for the precise definition).  One natural way to produce a near containment of C$^*$-algebras is to take a genuine inclusion $A_0\subseteq B$ and then set $A=uA_0u^*$ for some unitary $u\in\mathbb B(\mathcal{H})$ which is close to $I_{\mathcal{H}}$.  In this case, $A$ certainly embeds into $B$ and in general it is an open question whether, given any sufficiently small near inclusion of C$^*$-algebras, there must be an embedding of the `smaller' algebra into the nearly containing algebra.  In this paper we investigate the situation when the `larger' algebra is separable and type $\mathrm{I}$. Inspired by earlier work of Phillips \cite{Ph}, we provide conditions which imply that the `smaller' algebra is also type $\mathrm{I}$. From this we obtain an embedding of the `smaller' algebra into the `larger algebra'.

Near containments of operator algebras were introduced by the first named author in his work \cite{Ch2} on perturbations of operator algebras. In \cite{KK}, Kadison and Kastler introduced a metric on the collection of all operator subalgebras of $\mathbb B(\mathcal{H})$, conjecturing that sufficiently close C$^*$-algebras should be spatially isomorphic. Qualitatively, two algebras are close in this metric if and only if they are nearly contained in each other.  Combining Raeburn and Taylor's work \cite{RT} (or alternatively Johnson's work \cite{Johnson.P}) with \cite{Ch-P1} gives a complete answer to Kadison and Kastler's conjecture when one algebra is an injective von Neumann algebra.  Another approach to this result was obtained in \cite{Ch2} via near inclusions and this shows that any algebra close to an injective von Neumann algebra is spatially isomorphic to it via a unitary which is close to the identity. Recently the authors and Winter have shown that sufficiently close C$^*$-algebras on a separable Hilbert space are spatially isomorphic when one algebra is separable and nuclear, \cite{CSSWW} (see also \cite{CSSWWPNAS}). This provides a C$^*$-algebraic analogue of the original result for injective von Neumann algebras. Separability is a necessary hypothesis in the C$^*$-algebraic
situation due to the examples of \cite{Christensen.CounterExamples}. Another key difference which arises in the C$^*$-algebraic setting is that one cannot expect to produce a unitary close to the identity implementing a spatial isomorphism between close C$^*$-algebras; counterexamples are given in \cite{Johnson.PerturbationExample}.

We note that for pairs of separable C$^*$-algebras or of von Neumann
algebras, no examples are known of close algebras that are not isomorphic,
or of near containments $A\subseteq_{\gamma} B$ that do not admit an
embedding of $A$ into $B$. Positive results have been obtained by Phillips
and Raeburn \cite{Phillips.PerturbationAF,Phillips.Perturbations2} for the cases of AF C$^*$-algebras and algebras with
continuous trace, and more generally for all separable nuclear
C$^*$-algebras in \cite{CSSWW}.

For von Neumann subalgebras of $\mathbb B(\mathcal{H})$, \cite{Ch2} gives two general situations in which near containments give rise to embeddings. We choose a formulation in (1) which is different from the original one but equivalent to it.

\begin{enumerate}[(1)]
\item\label{I1} Suppose that $M$ is a von Neumann algebra satisfying Kadison's similarity property (for example, a properly infinite von Neumann algebra, or a II$_1$ factor with Property $\Gamma$) and $M$ is nearly contained in an injective von Neumann algebra $N$. Then there is a spatial embedding of $M$ into $N$.  This embedding can be implemented by a unitary close to the identity, where the estimates depend on the size of the original near inclusion and constants that arise in Kadison's similarity property.
\item\label{I2} Suppose that an injective von Neumann algebra $M$ is nearly contained in a von Neumann algebra $N$. Then there is a spatial embedding of $M$ into $N$, and a unitary implementing this embedding can be chosen close to the identity operator (in terms of the size of the original near inclusion).
\end{enumerate}
\noindent These statements are equivalent. Indeed \cite{Ch2} deduces the second from the first, using a commutation argument involving the double commutant theorem and a very similar argument can be used to show that the first statement implies the second statement. Given the complete analogue for separable nuclear C$^*$-algebras of the results for close von Neumann algebras, it is natural to also ask for C$^*$-algebraic versions of (\ref{I1}) and (\ref{I2}). For item (\ref{I2}) this is achieved in \cite[Section 2]{HKW}; in this paper we make progress towards a C$^*$-algebraic version of (\ref{I1}). 

As the double commutant theorem does not apply, our strategy is to find
conditions under which we can apply the results from \cite{CSSWW} for near
inclusions of pairs of nuclear C$^*$-algebras, i.e. a separable nuclear
C$^*$-algebra which is nearly contained in a nuclear C$^*$-algebra $B$ must
embed into $B$.  Thus we look for additional hypotheses on a nuclear
C$^*$-algebra $B$ that imply that every algebra with the similarity property
which is nearly contained in $B$ is automatically nuclear.  The class of type
$\mathrm{I}$ C$^*$-algebras is the largest class of C$^*$-algebras for which
every C$^*$-subalgebra is nuclear and so this is the least restrictive
hypothesis on $B$ for which this method could apply.   In Theorem \ref{thm4.5}
we show that a C$^*$-algebra which has the similarity property and is nearly
contained in a separable type $\mathrm{I}$ C$^*$-algebra is also type
$\mathrm{I}$.  We do this by showing that algebras nearly contained in a
separable liminary C$^*$-algebra are also liminary (Theorem \ref{thm3.1}), and
then using Phillips's methods to transfer a decomposition series for a type
$\mathrm{I}$ C$^*$-algebra $B$ to algebras nearly contained in $B$.  At this
stage we need to assume the `smaller algebra' has the similiarity property in
order to produce ideals in the `smaller' algebra corresponding to those in $B$
and this is a new feature compared with Phillips's original result that
C$^*$-algebras close to separable type $\mathrm{I}$ algebras are also type
$\mathrm{I}$.

\subsection*{Acknowledgments}

Our study of type I algebras was initiated after a visit by SW to Leeds
University. He would like to thank Garth Dales, Matt Daws and Charles Read for
stimulating conversations.  This paper was completed while SW was a visiting
researcher at the CRM in Barcelona.  He would like to thank the CRM for its
hospitality.

\section{Near Inclusions and property $D_k$}\label{sec2}

In this section we recall the definitions of a near inclusion and give  formulation of Kadison's similarity property suitable for use in Section \ref{sec4}.
\begin{definition}\label{DEFNI}
Let $A$ and $B$ be C$^*$-subalgebras of $\mathbb B(\Hs)$. For $\gamma>0$, write
$A\subseteq_\gamma B$ if, for each $x\in A$, there exists $y\in B$ with
$\|x-y\|\leq\gamma\|x\|$. Write $A\subset_\gamma B$ if there exists
$\gamma'<\gamma$ with $A\subseteq_{\gamma'}B$.  At one point in the paper we
need near inclusions when $A$ and $B$ are concrete operator spaces; these are
defined in exactly the same way.
\end{definition}

In general it is unknown whether a near inclusion $A \subseteq_\gamma B$ for two C$^*$-algebras on a Hilbert space $\Hs$ induces a near inclusion $B' \subseteq_{\gamma'} A'$ of their commutants.
If $A\subseteq {\mathbb{B}}(\Hs)$ is a C$^*$-algebra then each $x\in {\mathbb{B}}(\Hs)$ induces a
spatial derivation ${\mathrm{ad}}(x):A \to {\mathbb{B}}(\Hs)$, where
${\mathrm{ad}}(x)(a)=[a,x]=ax-xa$ for $a\in A$, and a simple calculation shows
that
$\|{\mathrm{ad}}(x)|_A\|\leq 2d(x,A')$. If, for some $k>0$, the reverse inequality
\begin{equation}\label{eq2.10}
d(x,A')\leq k\|{\mathrm{ad}}(x)|_A\|
\end{equation}
holds for all $x\in {\mathbb{B}}(\Hs)$, then $A$ is said to have the {\em{local
distance property}} $LD_k$ on $\Hs$.  This property was used in \cite{Ch2} to define the distance property $D_k$ for a C$^*$-algebra in terms of representations. However, there was an implicit assumption of nondegeneracy of representations, and so we now refine this definition. This is important for our work here as potentially degenerate representations will arise subsequently.
\begin{definition}\label{defn2.2}
A C$^*$-algebra $A$ has the distance property $D_k$ for some $k>0$ if, for every nondegenerate representation $\pi : A \to \mathbb{B}(\Hs)$, $\pi(A)$ has property $LD_k$ on $\Hs$. If $A$ is also a von Neumann algebra, then restriction to normal degenerate representations will define the normal version of property $D_k$, denoted $D_k^*$.
\end{definition}
\noindent One consequence of property $LD_k$, which we will use below, is that a near inclusion $A \subseteq_\gamma B$ induces a near inclusion $B' \subseteq_{2k\gamma} A'$ when $A$ has property $LD_k$, see \cite[Proposition 2.5]{CSSW}.  

Considerations of distance properties first arose in Arveson's study of nest algebras \cite{Ar} where he showed that
\begin{equation}\label{eq4.2}
 d(x,A') = \frac12\|{\mathrm{ad}}(x)|_A\|_{{\mathrm{cb}}},\qquad x\in {\mathbb{B}}(\Hs),
\end{equation}
for nest algebras $A$ (see also \cite{Ch}). The significance of this concept is that all derivations $\delta\colon \ A\to {\mathbb{B}}(\Hs)$ are inner if and only if $A$ has property $LD_k$ on $\Hs$ for some $k>0$. In this way, the distance property characterizes when all bounded derivations of a C$^*$-algebra $A$ into the bounded operators on some Hilbert space are inner. In \cite{Ki}, Kirchberg shows that this property is in turn equivalent to Kadison's similarity property for $A$, i.e. every bounded representation $A\rightarrow\mathbb B(\Hs)$ is similar to a $^*$-representation.  In summary, $A$ has Kadison's similarity property if and only if there is some $k>0$ such that $A$ has the distance property $D_k$.

The following three lemmas handle the adjustments which are necessary to consider algebras in general position. The disparity between the numbers $k$ and $k+1$ below is accounted for by the issue of degenerate versus nondegenerate representations. All ideals in the paper are assumed to be norm closed.

\begin{lem}\label{lem4.1}
Let $A\subseteq {\mathbb{B}}(\Hs)$ be a ${\mathrm{C}}^*$-algebra with an ideal $J$.
\begin{itemize}
\item[\rm (i)] If $A$ has property $LD_k$ then $J$ has property $LD_{k+1}$.
\item[\rm (ii)] If $A$ has property $D_k$ then $J$ has property $D_{k}$.
\end{itemize}
\end{lem}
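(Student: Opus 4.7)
The plan is to extract a single sub-claim that drives both parts: if $J\subseteq A\subseteq\mathbb B(\Ks)$ with $J$ an ideal of $A$ acting \emph{nondegenerately} on $\Ks$, and $A$ has $LD_k$ on $\Ks$, then $J$ has $LD_k$ on $\Ks$. Both (i) and (ii) follow once this is in hand; the failure of nondegeneracy in (i) is what costs the extra unit in $k+1$.

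To prove the sub-claim, I would first show $J''=A''$. Since $J$ is an ideal of $A$, $J''$ is a weakly closed two-sided ideal of $A''$, hence of the form $zA''$ for a central projection $z\in A''$. The identity of $J''$ is the projection onto $\overline{J\Ks}=\Ks$, i.e.\ $I_{\Ks}$, so $z=I$ and $J''=A''$; in particular $J'=A'$. Kaplansky density then supplies, for any $a\in A$ with $\|a\|\le 1$, a net $(a_\lambda)$ in the unit ball of $J$ with $a_\lambda\to a$ in the strong operator topology. Since $[a_\lambda,x]\to [a,x]$ strongly for every $x\in\mathbb B(\Ks)$ and $\|[a_\lambda,x]\|\le \|\mathrm{ad}(x)|_J\|$, SOT-lower semicontinuity of the norm yields $\|\mathrm{ad}(x)|_A\|=\|\mathrm{ad}(x)|_J\|$. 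Applying $LD_k$ for $A$, $d(x,J')=d(x,A')\le k\|\mathrm{ad}(x)|_A\|=k\|\mathrm{ad}(x)|_J\|$.

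For (i), let $p$ be the projection onto $\overline{J\Hs}$. Because $J$ is an ideal of $A$, the subspace $J\Hs$ is $A$-invariant, so $p\in A'$; and as the strong limit of a bounded approximate identity $(e_\lambda)$ for $J$, $p\in J''\subseteq A''$, so $p$ is central in $A''$. The same approximation gives $\|[p,x]\|\le \|\mathrm{ad}(x)|_J\|$ by SOT-lower semicontinuity. Putting $x'=pxp+(1-p)x(1-p)$, a direct calculation yields $x-x'=(2p-I)[p,x]$, hence $\|x-x'\|\le\|\mathrm{ad}(x)|_J\|$. Since $p$ is central in $A''$, the reduction $Ap$ acts on $p\Hs$ and inherits $LD_k$ there (by lifting a test operator $z\in\mathbb B(p\Hs)$ to $z\oplus 0$, applying $LD_k$ for $A$ on $\Hs$, and compressing back). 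Now $Jp$ is a nondegenerate ideal in $Ap$, so the sub-claim gives $LD_k$ for $Jp$ on $p\Hs$; applied to $pxp|_{p\Hs}$ and using $\|\mathrm{ad}(pxp|_{p\Hs})|_{Jp}\|\le \|\mathrm{ad}(x)|_J\|$, this produces $y_{11}\in (Jp)'$ with $\|pxp-y_{11}\|\le k\|\mathrm{ad}(x)|_J\|$. Using nondegeneracy of $Jp$ on $p\Hs$, one checks $J'=(Jp)'\oplus\mathbb B((1-p)\Hs)$ in block form, so $y:=y_{11}\oplus (1-p)x(1-p)$ lies in $J'$ and $\|x-y\|\le\|x-x'\|+\|x'-y\|\le (k+1)\|\mathrm{ad}(x)|_J\|$.

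For (ii), any nondegenerate representation $\sigma:J\to\mathbb B(\Ks)$ extends uniquely to a nondegenerate representation $\tilde\sigma:A\to \mathbb B(\Ks)$ via $\tilde\sigma(a)\sigma(j)\xi=\sigma(aj)\xi$; $D_k$ for $A$ gives $\tilde\sigma(A)$ property $LD_k$ on $\Ks$, and the sub-claim applied to $\sigma(J)\subseteq\tilde\sigma(A)$ delivers $LD_k$ for $\sigma(J)$, so $J$ has $D_k$. The main obstacle is the sub-claim itself: the essential insight is that nondegeneracy forces $J''=A''$, whereupon Kaplansky density gives the \emph{equality} $\|\mathrm{ad}(x)|_A\|=\|\mathrm{ad}(x)|_J\|$; a naive estimate via an approximate identity only yields a factor of $2$, which would produce strictly worse constants than those claimed.
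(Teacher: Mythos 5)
Your proof is correct and follows essentially the same route as the paper: part (i) rests on cutting by the central support projection of $J$, bounding the off-diagonal corner by the commutator with that projection, and using the fact that on the support the derivation norms over $J$ and over $A$ (equivalently their weak closures, via Kaplansky density) agree; part (ii) extends a nondegenerate representation of $J$ to $A$ and uses the coincidence of the weak closures. The only difference is organisational — you isolate the nondegenerate case as a sub-claim, where the paper reduces to weakly closed ideals of von Neumann algebras — and the constants match.
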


\begin{proof}
(i) We first consider the special case when $A$ is a von~Neumann algebra and $J$ is a weakly closed ideal. Then there is a central projection $z\in A$ so that $J=Az$. It will be convenient to write operators in ${\mathbb{B}}(\Hs)$ as $2\times 2$ matrices relative to the decomposition $I_{\mathcal H}=z+z^\bot$. Then operators in $A$ have the form $\left(\begin{smallmatrix} j&0\\ 0&*\end{smallmatrix}\right)$ for $j\in J$. Consider an element $t = \left(\begin{smallmatrix} t_1&t_2\\ t_3&t_4\end{smallmatrix}\right)\in {\mathbb{B}}(\Hs)$ such that ${\mathrm{ad}}(t)|_J$ has norm 1. We must find an operator $j'\in J'$ such that $\|t-j'\| \le k+1$. Since $\left(\begin{smallmatrix} 0&0\\ 0&t_4\end{smallmatrix}\right) \in J'$ we can assume that $t_4=0$. For any $j\in J$ of norm 1, its commutator with $t$ is $\left(\begin{smallmatrix} [j,t_1]&jt_2\\ -t_3j&0\end{smallmatrix}\right)$ and so $\left(\begin{smallmatrix} t_1&0\\ 0&0\end{smallmatrix}\right)$ induces a derivation on $J$ of norm at most 1. This agrees with the induced derivation on $A$ and so it is within a distance $k$ of some operator $j'\in A'\subseteq J'$. Taking $j=z=\left(\begin{smallmatrix} 1&0\\ 0&0\end{smallmatrix}\right)$ above, we see that $\left(\begin{smallmatrix} 0&t_2\\ -t_3&0\end{smallmatrix}\right)$ has norm at most 1, and the same is then true for $\left(\begin{smallmatrix} 0&t_2\\ t_3&0\end{smallmatrix}\right)$. It follows that $\|t-j'\|\le k+1$.

For a general inclusion $J\subseteq A$, property $LD_k$ passes to the weak closure, and the result follows from the special case applied to the weak closures of $J\subseteq A$.

\noindent (ii) If $\pi\colon \ J\to {\mathbb{B}}(\Hs)$ is a nondegenerate representation, then it extends to a nondegenerate representation, also denoted $\pi$, of $A$ on the same Hilbert space $\Hs$, \cite[Prop. 2.10.4]{Dix}. Since $A$ has property $D_k$, $\pi(A)$ has property $LD_k$ on $\Hs$, so $\pi(J)$ has property $LD_{k}$ on $\Hs$ because the weak closures of $\pi(J)$ and of $\pi(A)$ coincide in this situation. Since $\pi$ was arbitrary, we see that $J$ has property $D_{k}$.\end{proof} 

\begin{lem}\label{lem2.3b}
Let $A$ be a ${\mathrm{C}}^*$-algebra with property $D_k$. Then, for any (possibly degenerate) representation $\pi :A\to \mathbb{B}(\Hs)$, $\pi(A)$ has property $LD_{k+1}$ on $\Hs$.
\end{lem}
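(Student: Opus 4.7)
The plan is to reduce to the nondegenerate case by decomposing $\Hs$ relative to the essential subspace of $\pi$. Set $\Hs_0 = \overline{\pi(A)\Hs}$ with orthogonal complement $\Hs_0^\perp$, so that $\pi = \pi_0 \oplus 0$, where $\pi_0\colon A \to \mathbb{B}(\Hs_0)$ is a nondegenerate representation. Every element $\pi(a)$ then has the block form $\mathrm{diag}(\pi_0(a),0)$ relative to $\Hs = \Hs_0\oplus \Hs_0^\perp$, and one checks that $\pi(A)'$ contains every operator of the form $\mathrm{diag}(y_1, y_4)$ with $y_1 \in \pi_0(A)' \subseteq \mathbb{B}(\Hs_0)$ and $y_4\in \mathbb{B}(\Hs_0^\perp)$.

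Take an arbitrary $x\in\mathbb{B}(\Hs)$ and, scaling, assume $\|\mathrm{ad}(x)|_{\pi(A)}\| = 1$. Write $x = \bigl(\begin{smallmatrix} x_1 & x_2 \\ x_3 & x_4 \end{smallmatrix}\bigr)$ and expand
\[
\mathrm{ad}(x)(\pi(a)) = \begin{pmatrix} [\pi_0(a),x_1] & \pi_0(a) x_2 \\ -x_3\pi_0(a) & 0 \end{pmatrix}.
\]
Each block of this matrix has norm at most $\|\mathrm{ad}(x)(\pi(a))\| \le \|\pi(a)\|$, which yields simultaneously $\|[\pi_0(a),x_1]\| \le \|\pi_0(a)\|$, $\|\pi_0(a)x_2\| \le \|\pi_0(a)\|$, and $\|x_3\pi_0(a)\| \le \|\pi_0(a)\|$ for every $a\in A$.

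For the diagonal piece, $x_1$ now induces a derivation on $\pi_0(A)$ of norm at most $1$, so property $D_k$ applied to the nondegenerate representation $\pi_0$ supplies $y_1\in \pi_0(A)'$ with $\|x_1 - y_1\| \le k$. For the off-diagonal pieces, choose an approximate unit $(e_\lambda)$ in $A$ whose image under $\pi_0$ converges strongly to $I_{\Hs_0}$ by nondegeneracy; passing to the limit in $\|\pi_0(e_\lambda) x_2 \xi\| \le \|\xi\|$ (for $\xi\in\Hs_0^\perp$, noting $x_2\xi\in\Hs_0$) and in $\|x_3\pi_0(e_\lambda)\eta\| \le \|\eta\|$ (for $\eta\in\Hs_0$) yields $\|x_2\|\le 1$ and $\|x_3\|\le 1$.

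Finally, set $y = \mathrm{diag}(y_1, x_4) \in \pi(A)'$. Then
\[
x - y = \begin{pmatrix} x_1 - y_1 & 0 \\ 0 & 0 \end{pmatrix} + \begin{pmatrix} 0 & x_2 \\ x_3 & 0 \end{pmatrix},
\]
and since an antidiagonal block matrix has norm equal to the maximum of the norms of its blocks, the triangle inequality gives $\|x-y\|\le k + \max(\|x_2\|,\|x_3\|) \le k+1$, as required. The only real subtlety is the last paragraph's $+1$: the bounds on $x_2, x_3$ are derived not from $D_k$ itself but from nondegeneracy of $\pi_0$, which is precisely what accounts for the gap between $D_k$ and $LD_{k+1}$ here, echoing the mechanism in Lemma \ref{lem4.1}(i).
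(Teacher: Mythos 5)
Your proof is correct and takes essentially the same approach as the paper's: decompose $\Hs$ over the essential subspace $\overline{\pi(A)\Hs}$, apply $D_k$ via the nondegenerate corner $\pi_0$ to handle the diagonal block, and bound the off-diagonal blocks by $1$ to account for the passage from $LD_k$ to $LD_{k+1}$. The only cosmetic difference is that the paper passes to the weak closure and tests the commutator against the support projection $p$ (invoking the argument of Lemma \ref{lem4.1}(i)), whereas you remain at the C$^*$-level and use an approximate unit; both give the same estimate.
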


\begin{proof}
Consider a representation $\pi :A\to \mathbb{B}(\Hs)$ and let $p$ be the support projection of $\pi(A)$, which is the unit for the weak closure $M$. The restriction of $\pi$ to $p\Hs$ is nondegenerate so $\pi(A)$ has property $LD_k$ on $p\Hs$, as does $M$. Arguing as in the proof of Lemma \ref{lem4.1} (i), we see that $M$ has property $LD_{k+1}$ on $\Hs$, and it follows that $\pi(A)$ has property $LD_{k+1}$ on $\Hs$.
\end{proof}

\begin{lem}\label{lem2.2a}
Let $M$ be a properly infinite von Neumann
algebra, let $A$ be a ${\mathrm{C}}^*$-algebra and let $\Hs$ be a separable infinite dimensional Hilbert space.
\begin{itemize}
\item[{\rm{(i)}}]  $M$ has property $D_{3/2}$. 
\item[{\rm{(ii)}}] $A \otimes {\mathbb{K}}(\Hs)$ has property $D_{3/2}$.
\end{itemize}
\end{lem}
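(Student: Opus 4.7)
My plan is to establish (i) via the classical fact, originating in Christensen's work on derivations, that a properly infinite von~Neumann algebra acting nondegenerately on a Hilbert space satisfies the local distance property with constant $3/2$, and then to deduce (ii) from (i) by showing that any nondegenerate image of $A \otimes \bb K(\Hs)$ has properly infinite weak closure.

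For (i), let $\pi \colon M \to \bb B(\Ks)$ be a nondegenerate, hence unital, representation. Since $M$ is properly infinite, it contains isometries $v_1, v_2$ with $v_1 v_1^* + v_2 v_2^* = I_M$, and multiplying by $v_1^*$ on one side and $v_2$ on the other forces $v_i^* v_j = \delta_{ij} I_M$. Their images $\pi(v_i)$ satisfy the same relations inside $\pi(M)$, so the weak closure $N := \pi(M)''$ is a properly infinite von~Neumann algebra acting nondegenerately on $\Ks$. Christensen's classical estimate, proved by averaging over the Cuntz-type isometries inside $N$, gives $d(x, N') \le \tfrac32 \|\ad(x)|_N\|$ for every $x \in \bb B(\Ks)$. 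Since $\pi(M)' = N'$ by double commutant and Kaplansky density yields $\|\ad(x)|_{\pi(M)}\| = \|\ad(x)|_N\|$ (the commutator is strong* continuous on bounded sets and the unit ball of $\pi(M)$ is strong* dense in that of $N$), this estimate transfers to $\pi(M)$, giving the required property $LD_{3/2}$ on $\Ks$.

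For (ii), fix a nondegenerate representation $\pi \colon A \otimes \bb K(\Hs) \to \bb B(\Ks)$ and extend it canonically to a unital representation $\bar\pi$ of the multiplier algebra $\cl M(A \otimes \bb K(\Hs))$. The restriction of $\bar\pi$ to the canonical copy of $\bb K(\Hs)$, namely $1 \otimes \bb K(\Hs) \subseteq \cl M(A \otimes \bb K(\Hs))$, is a unital, hence nondegenerate, representation of $\bb K(\Hs)$ and so is unitarily equivalent to a multiple of the identity representation. Identify $\Ks$ with $\Hs \otimes \Ls$ accordingly so that $\bar\pi(1 \otimes k) = k \otimes I_\Ls$; then the weak closure of $\bar\pi(1 \otimes \bb K(\Hs))$ equals $\bb B(\Hs) \otimes I_\Ls$. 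As $\Hs$ is infinite-dimensional, this is a properly infinite von~Neumann algebra, and it sits unitally inside $\pi(A \otimes \bb K(\Hs))''$, forcing the latter to be properly infinite as well. Applying (i) to the properly infinite von~Neumann algebra $\pi(A \otimes \bb K(\Hs))''$ and then transferring property $LD_{3/2}$ down to $\pi(A \otimes \bb K(\Hs))$ via the same Kaplansky density argument as in (i) completes the proof.

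The main technical input is Christensen's distance estimate for properly infinite von~Neumann algebras invoked in (i); once it is granted --- either cited from \cite{Ch2} or reproved by a short averaging argument over the Cuntz isometries $v_1, v_2$ --- both parts follow by routine reductions to a nondegenerate von~Neumann algebraic setting combined with Kaplansky density transfers of commutator norms and commutants.
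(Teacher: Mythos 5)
Your proof is correct and follows essentially the same route as the paper: both parts reduce to Christensen's $3/2$-estimate for properly infinite von Neumann algebras by showing that the weak closure of any nondegenerate image is properly infinite and then transferring $LD_{3/2}$ down to the C$^*$-algebra via the Kaplansky density/common commutant argument (the paper deduces proper infiniteness from the absence of nonzero tracial states rather than from your explicit isometries and the multiplier-algebra analysis, but this is a cosmetic difference). One small slip: the restriction of $\bar\pi$ to $1\otimes\mathbb{K}(\Hs)$ is not \emph{unital}, since this copy of the compacts has no unit; its nondegeneracy instead follows because $1\otimes\mathbb{K}(\Hs)$ contains an approximate unit for $A\otimes\mathbb{K}(\Hs)$.
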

\begin{proof} (i) In our notation, \cite[Theorem 2.4]{Ch} shows that $M$ has
property $D_{3/2}^*$. To show that it has property $D_{3/2}$, consider a
nondegenerate representation $\pi$ of $M$ on some Hilbert space $\mathcal K$.
Then the weak closure  ${\overline{\pi(M)}}^{\,\mathrm{wot}}$ must be properly
infinite, otherwise $M$ would have a nonzero tracial state. Thus
${\overline{\pi(M)}}^{\,\mathrm{wot}}$ has property $LD_{3/2}$ on $\mathcal K$.
Since $\pi$ was arbitrary, we conclude that $M$ has property $D_{3/2}$.

\noindent  (ii) Consider an arbitrary nondegenerate representation $\pi$ of 
$A \otimes {\mathbb{K}}(\Hs)$. Its weak closure is properly infinite otherwise there would be a nonzero tracial state on this tensor product. The result now follows from (i).
\end{proof}

\section{Liminary C$^*$-algebras}\label{sec3}

Recall that a C$^*$-algebra $A$ is \emph{liminary} if every irreducible representation $\pi:A\rightarrow\mathbb B(\Hs)$ has $\pi(A)=\mathbb K(\Hs)$, the algebra of compact operators on $\Hs$.  These form the building blocks of type $\mathrm{I}$ C$^*$-algebras as discussed in the next section.  Our objective  in this section is to establish that C$^*$-algebras that are nearly contained in separable liminary C$^*$-algebras must themselves be liminary. We start by recording an easy observation for later use.

\begin{lem}\label{lem2.2}
\begin{itemize}
\item[{\rm (i)}] Let $J$ be an ideal in a ${\mathrm{C}}^*$-algebra $B$ and
let $A$ be a $\mathrm{C}^*$-subalgebra of $B$. If $A\subset_1 J$, then
$A\subseteq J$.
\item[\rm (ii)] If $A$ is a $\mathrm{C}^*$-subalgebra of
$\mathbb{B}(\mathcal{H})$ and $A\subset_1\mathbb{K}(\mathcal{H})$, then
$A\subseteq \mathbb{K}(\mathcal{H})$.
\end{itemize}
\end{lem}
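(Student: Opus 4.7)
The plan is to prove (i) by passing to the quotient $B/J$ and then observe that (ii) is a direct consequence of (i).

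For (i), pick $\gamma'<1$ with $A\subseteq_{\gamma'}J$, and let $\pi:B\to B/J$ be the quotient $^*$-homomorphism. For $a\in A$, choose $j\in J$ with $\|a-j\|\leq \gamma'\|a\|$; since $\pi(j)=0$,
\[
\|\pi(a)\|=\|\pi(a-j)\|\leq\|a-j\|\leq\gamma'\|a\|.
\]
Restrict $\pi$ to $A$; its kernel is the ideal $I:=A\cap J$ of $A$, and the induced map on the quotient is isometric, so the estimate above reads $\|a+I\|_{A/I}\leq\gamma'\|a\|$ for every $a\in A$.

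Now comes the self-bootstrapping step, which I view as the only real content. Since the inequality holds for every element of $A$, it applies in particular to $a-i$ for any $i\in I\subseteq A$, giving
\[
\|a+I\|=\|(a-i)+I\|\leq\gamma'\|a-i\|.
\]
Taking the infimum over $i\in I$ yields $\|a+I\|\leq\gamma'\|a+I\|$, and since $\gamma'<1$ this forces $\|a+I\|=0$, i.e.\ $a\in I\subseteq J$. Thus $A\subseteq J$.

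For (ii), simply apply (i) with $B=\mathbb{B}(\mathcal{H})$ and $J=\mathbb{K}(\mathcal{H})$, which is a (norm-closed) ideal of $\mathbb{B}(\mathcal{H})$. There is nothing to overcome beyond noticing the strict inequality $\gamma'<1$ in the definition of $\subset_1$, which is exactly what powers the contraction step above.
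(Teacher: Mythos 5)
Your proof is correct and follows essentially the same route as the paper: pass to the quotient map $\pi:B\to B/J$, note that $A\subset_1 J$ makes $\pi|_A$ strictly contractive, and conclude $\pi(A)=0$. The paper states the final implication in one line; your bootstrapping via the isometric induced map on $A/(A\cap J)$ is exactly the standard justification of that step, so there is nothing to add.
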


\begin{proof}
Let $\pi:B\to B/J$ be the quotient map. By hypothesis, the restriction of
$\pi$ to $A$ is strictly contractive and so $\pi(A)=0$. Thus $A\subseteq J$.
This proves (i), and the second part is a special case that we have stated
separately for future reference.
\end{proof}

We will need to decompose general representations in terms of irreducible ones, leading naturally to direct integral theory for which a standard reference is \cite[Chapter~14]{KR2}. We briefly review the facts that we will need, and all are taken from the first two sections of \cite[Chapter~14]{KR2}.

A direct integral decomposition of a separable Hilbert space $\Hs$ consists of a measure space $(X,\mu)$ and Hilbert spaces $\Hs_t$, $t\in X$, so that $\Hs = \int^\oplus_X \Hs_t \,d\mu(t)$. Operators in ${\mathbb{B}}(\Hs)$ are diagonalizable if they act as scalar multiples of the identity on each $\Hs_t$, and are decomposable if they can be written $T = \int^\oplus_X T_t\, d\mu(t)$ for operators $T_t\in {\mathbb{B}}(\Hs_t)$ with suitable measurability requirements. For such a decomposable $T$, we have that $\|T\|$ is the essential supremum of the measurable function $t\to\|T_t\|$. The diagonalizable operators form a von~Neumann algebra which  can be identified with $L^\infty(X,\mu)$, and whose commutant is the von~Neumann algebra of decomposable operators. Given an abelian von~Neumann algebra $C$ on a separable Hilbert space $\Hs$, there is a direct integral decomposition of $\Hs$ for which $C$ becomes the algebra of diagonalizable operators.

Given a direct integral decomposition $\Hs = \int^\oplus_X \Hs_t\, d\mu(t)$ and a separable C$^*$-algebra $A$ contained in the algebra of decomposable operators, we fix a countable dense set $\{a_n\}^\infty_{n=1}$ in $A$ and write each $a_n$ as $\int^\oplus_X a_n(t) \, d\mu(t)$. Then we define $A(t)\subseteq {\mathbb{B}}(\Hs_t)$ to be the C$^*$-algebra generated by $\{a_n(t)\}^\infty_{n=1}$ for each $t\in X$. Any element $a\in A$ has a decomposition $\int^\oplus_X a_t\, d\mu(t)$ where $a_t\in A(t)$ for almost all $t\in X$. More generally, for a separable C$^*$-algebra $A$, each representation $\pi\colon \ A\to {\mathbb{B}}(\Hs)$ with range in the algebra of decomposable operators has a decomposition $\pi_t\colon \ A\to {\mathbb{B}}(\Hs_t)$ such that $\pi = \int^\oplus_X \pi_t\, d\mu(t)$, and $\pi(A)(t) = \pi_t(A)$ for almost all $t\in X$. Moreover, $\pi(A)' = \int^\oplus_X \pi_t(A)'\, d\mu(t)$,
and $\pi(A)'$ is the algebra of diagonalizable operators if and only if almost all $\pi_t$'s are irreducible. The following lemma establishes the link between near inclusions and direct integrals.

\begin{lem}\label{lem2.1}
Let $\Hs$ be a separable Hilbert space with direct integral decomposition
$\int^\oplus_X H_t\, d\mu(t)$. Let $A$ and $E$ be respectively a separable
${\mathrm{C}}^*$-subalgebra and a separable operator subspace of the algebra of
decomposable operators. If $A\subseteq_\delta E$ for some constant $\delta$,
then $A(t) \subseteq_\delta E(t)$ for almost all $t\in X$.
\end{lem}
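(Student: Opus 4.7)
The plan is to reduce the fibrewise near inclusion to a functional-calculus truncation inside $A$, carried out at a single $t$ from a conull set that is set up in advance using countable dense sequences and the essential-supremum description of the decomposable norm.

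Fix countable dense sequences $\{a_n\} \subseteq A$ and $\{e_m\} \subseteq E$, and use $A \subseteq_\delta E$ to select $f_n \in E$ with $\|a_n - f_n\| \leq \delta \|a_n\|$ for each $n$. Since the norm of a decomposable operator equals the essential supremum of its fibre norms, a countable union of null sets produces a conull set $X_0 \subseteq X$ outside of which (i) the fibre map $\phi_t : A \to \mathbb{B}(\Hs_t)$ is a $*$-homomorphism, (ii) $\phi_t$ extends by continuity to a contractive linear map on the separable subspace $A + E$, and (iii) $A(t) = \phi_t(A)$, which is automatically closed as the image of a $*$-homomorphism between C$^*$-algebras, while $E(t)$ equals the norm closure of $\{e(t) : e \in E\}$ in $\mathbb{B}(\Hs_t)$.

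Now fix $t \in X_0$ and $x \in A(t)$ with $r := \|x\| > 0$. By (iii) there is some $a \in A$ with $a(t) = x$, and the key step is to replace $a$ by a truncation with the same norm as $x$ without disturbing its value at $t$. With $g_r(s) = \min(s, r)$ and $k_m(s) = g_r(s)/(s + 1/m) \in C_0([0,\infty))$, the elements $a \cdot k_m(|a|) = u_a \phi_m(|a|) \in A$, where $\phi_m(s) := s k_m(s)$ and $u_a$ is the polar partial isometry of $a$, are Cauchy in norm because $\phi_m \to g_r$ uniformly on $[0,\infty)$, so they converge in norm to some $\tilde a \in A$ with $\|\tilde a\| \leq r$. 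To evaluate $\tilde a(t)$, push the limit through the fibre map using (i) and (ii): writing $a(t) = w_t |a(t)|$ for the polar decomposition in $\mathbb{B}(\Hs_t)$, one obtains $(a \cdot k_m(|a|))(t) = a(t) k_m(|a(t)|) = w_t \phi_m(|a(t)|) \to w_t g_r(|a(t)|)$, and since $\sigma(|a(t)|) \subseteq [0, r]$ where $g_r$ acts as the identity, this limit equals $w_t |a(t)| = a(t) = x$. Hence $\tilde a(t) = x$ and $\|\tilde a\| = \|x\|$. Applying the near inclusion to $\tilde a$ supplies $e \in E$ with $\|\tilde a - e\| \leq \delta \|\tilde a\| = \delta \|x\|$, and (ii) then yields $\|x - e(t)\| = \|(\tilde a - e)(t)\| \leq \|\tilde a - e\| \leq \delta \|x\|$ with $e(t) \in E(t)$, confirming $A(t) \subseteq_\delta E(t)$.

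The main subtlety, and the one non-trivial point to address in the plan, is the uniformity of the null set in $x$: since $\tilde a$ and $e$ both depend on $x$, a raw appeal to the essential-supremum definition would only guarantee the fibre bound $\|(\tilde a - e)(t)\| \leq \|\tilde a - e\|$ off an $x$-dependent null set. Property (ii) is specifically designed to circumvent this, by establishing fibre contractivity on the entire separable space $A + E$ once and for all using a countable dense subset together with continuous extension, so that the single conull set $X_0$ serves every $x \in A(t)$ simultaneously and the argument proceeds without any $x$-dependent choices in the exceptional set.
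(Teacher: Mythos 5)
Your proof is correct and follows essentially the same route as the paper: a single conull set is extracted from countable dense subsets so that the fibre maps become contractive $*$-homomorphisms on all of the separable algebra generated by $A$ and $E$, and the fibrewise estimate is then obtained by lifting each $x\in A(t)$ to an element of $A$ of the \emph{same} norm before applying the near inclusion. The only difference is that where you build this norm-preserving lift by hand via the functional-calculus truncation $a\,k_m(|a|)$, the paper simply invokes proximinality of ideals in C$^*$-algebras (\cite[Prop.~II.1.1]{HWW}, applied to $\ker\phi_t\subseteq A$), which is exactly the fact your truncation argument proves.
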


\begin{proof}
Let $D$ be the separable C$^*$-algebra generated by $A$ and $E$, and fix a
countable dense set $\{d_n\}^\infty_{n=1}$ in $D$ which includes countable
dense subsets of $A$ and $E$. By taking their span over the rational
field, we may assume that this is a listing of the elements in a countable
$\mathbb{Q}$-subspace. Choose representations $d_n(t)$, $t\in X$ for each $d_n$.
By removing a countable number of null sets, we may assume that
$\|d_n\|=\sup\{\|d_n(t)\|\colon t\in X\}$. This ensures that any Cauchy
sequence from this set is pointwise Cauchy, allowing us to choose
representations $d(t)$ for each $d\in D$ to satisfy
$\|d\|=\sup\{\|d(t)\|\colon t\in X\}$. Removal of another countable collection
of null sets allows us to  assume that $d\mapsto d(t)$ defines a *-homomorphism
for $d\in D$.

Now let $A(t)$ be the C$^*$-algebra generated by $\{a(t)\colon a\in A\}$ for
$t\in X$, while $E(t)$ is the operator space generated by $\{e(t)\colon e\in
E\}$. By proximinality of ideals in C$^*$-algebras \cite[Prop. II.1.1]{HWW},
given $t\in X$ and $y\in
A(t)$, $\|y\|=1$, there exists $a\in A$, $\|a\|=1$, such that $a(t)=y$. By
hypothesis we can choose $e\in E$ so that $\|a-e\|\leq \delta$, and so
$\|a(t)-e(t)\|\leq \delta$. This shows that $A(t)\subseteq_{\delta}E(t)$ for
all $t\in X$ with the possible exception of the countable number of deleted
null sets.
\end{proof}

We can now prove the main theorem in this section.

\begin{thm}\label{thm3.1}
Let $B$ be a separable liminary ${\mathrm{C}}^*$-algebra and let $A$ be a ${\mathrm{C}}^*$-algebra such that $A \subseteq_\delta B$ for some $\delta<1/201$. Then $A$ is liminary.
\end{thm}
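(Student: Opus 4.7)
The plan is to decompose $B$ as a direct integral of irreducible representations whose fibers are $\mathbb{K}(\mathcal{H}_t)$ (by liminarity of $B$), to place $A$ inside the algebra of decomposable operators by a small unitary perturbation, and then to apply Lemma~\ref{lem2.1} and Lemma~\ref{lem2.2}(ii) fibrewise. The threshold $\delta<1/201$ is calibrated so that the near inclusion constant remains below $1$ after the perturbation step.

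Standard reductions permit one to assume that $A$, $B$, and $\mathcal{H}$ are all separable: since $B$ is separable, every countable family of $\delta$-approximants lies in a separable subalgebra of $B$, so each separable $A_0\subseteq A$ admits a separable $B_0\subseteq B$ with $A_0\subseteq_\delta B_0$, and liminarity is detected on separable subalgebras (irreducible representations arise as GNS representations of pure states, which are determined by countably many vectors). Then I would choose a maximal abelian von~Neumann subalgebra $C\subseteq B'$ and identify $C\cong L^\infty(X,\mu)$ to obtain a direct integral $\mathcal{H}=\int^\oplus_X\mathcal{H}_t\,d\mu(t)$ in which $C'$ is the algebra of decomposable operators and $B=\int^\oplus_X B(t)\,d\mu(t)$ with $B(t)$ irreducible on $\mathcal{H}_t$ for almost every $t$; liminarity of $B$ then forces $B(t)=\mathbb{K}(\mathcal{H}_t)$ almost everywhere.

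The central obstacle is that $C$ lies only in $B'$, not in $A'$, so $A$ does not consist of decomposable operators and Lemma~\ref{lem2.1} cannot be applied to $A$ directly. However, the near inclusion $A\subseteq_\delta B\subseteq C'$ forces $A$ to almost commute with $C$: for $a\in A$ and $c\in C$, choosing $b\in B$ with $\|a-b\|\le\delta\|a\|$ gives $[c,a]=[c,a-b]$ and hence $\|[c,a]\|\le 2\delta\|c\|\|a\|$. Since $C'$ is an injective (type~$\mathrm{I}$) von~Neumann algebra, a Christensen-type perturbation result for near inclusions into injective von~Neumann algebras (see~\cite{Ch2}) supplies a unitary $u\in\mathbb{B}(\mathcal{H})$ with $\|u-1\|$ controlled linearly in $\delta$ such that $uAu^*\subseteq C'$. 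Writing $\delta':=\delta+2\|u-1\|$, the triangle inequality $\|uau^*-b\|\le 2\|u-1\|\|a\|+\delta\|a\|$ gives $uAu^*\subseteq_{\delta'}B$, and the choice $\delta<1/201$ is precisely what ensures $\delta'<1$.

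With $uAu^*$ now inside $C'$, Lemma~\ref{lem2.1} yields $(uAu^*)(t)\subseteq_{\delta'}B(t)=\mathbb{K}(\mathcal{H}_t)$ for almost every $t$, and Lemma~\ref{lem2.2}(ii) upgrades this to $(uAu^*)(t)\subseteq\mathbb{K}(\mathcal{H}_t)$ a.e.\ using $\delta'<1$. Thus $uAu^*$ embeds as a separable C$^*$-subalgebra of the direct integral of the $\mathbb{K}(\mathcal{H}_t)$; every irreducible representation of such a subalgebra factors through evaluation at a point of its spectrum and lands in some $\mathbb{K}(\mathcal{H}_t)$, so $uAu^*$, and therefore $A$, is liminary. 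The principal technical hurdle lies in securing the unitary $u$ with the sharp perturbation estimate needed for $\delta<1/201$; a secondary point requiring care is the concluding identification of the fibred subalgebra as liminary.
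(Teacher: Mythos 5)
Your proposal has two genuine gaps, and they occur precisely at the two points you flag as ``technical hurdles.''

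First, the perturbation step does not follow from the results you invoke. Christensen's theorems on near inclusions involving injective von~Neumann algebras come in two flavours (items (\ref{I1}) and (\ref{I2}) of the introduction): either the \emph{nearly contained} algebra is injective and the ambient algebra is arbitrary, or the ambient algebra is injective and the nearly contained algebra satisfies Kadison's similarity property ($D_k$). Your situation is a general C$^*$-algebra $A$ nearly contained in the injective von~Neumann algebra $C'$, and Theorem \ref{thm3.1} imposes no similarity hypothesis on $A$; so neither result applies, and producing a unitary with $uAu^*\subseteq C'$ is essentially an instance of the open embedding problem. The almost-commutation estimate $\|[c,a]\|\le 2\delta\|c\|\|a\|$ only yields the pointwise statement $d(a,C')\le 2\delta\|a\|$ (by averaging over unitaries of the abelian algebra $C$), not a single unitary moving all of $A$ into $C'$ coherently. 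The paper's proof is engineered to avoid exactly this: it fixes one irreducible representation $\pi$ of $A$, passes to a GNS representation $\sigma$ of $C^*(A,B)$ extending the associated pure state, and conjugates only the von~Neumann algebra $R=W^*(\overline{\sigma(A)}^{\,\mathrm{wot}}e,I)$ into $\sigma(B)''$. That $R$ is type I (hence injective) is automatic because $\overline{\sigma(A)}^{\,\mathrm{wot}}q=\mathbb B(\Hs)$, so the injective-into-arbitrary direction of \cite{Ch2} applies with no hypothesis on $A$.

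Second, even granting the perturbation, your concluding inference is false: a separable C$^*$-subalgebra of the decomposable operators whose fibres all lie in $\mathbb K(\Hs_t)$ need not be liminary. For example, $C^*(F_2)$ is residually finite dimensional, hence embeds into $\prod_k M_{n_k}$ acting on $\bigoplus_k\mathbb C^{n_k}$; this is a direct integral over $\mathbb N$ with counting measure in which every fibre of the subalgebra sits inside a (finite-dimensional) algebra of compacts, yet $C^*(F_2)$ is not even type I. Irreducible representations of the subalgebra simply need not be subordinate to the fibre evaluations. This is why the paper's argument must track the originally fixed irreducible representation through the disintegration: writing $\phi(a)=\int_X\langle\alpha_t(a)\eta_t,\eta_t\rangle\,d\mu(t)$ and using that the left kernel $J=\{a:\phi(a^*a)=0\}$ is a maximal regular left ideal, it locates a single $t_0$ with $\psi_{t_0}$ a positive multiple of $\phi$, so that $\pi$ is equivalent to a subrepresentation of $\alpha_{t_0}$ and therefore lands in the compacts. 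Without some device of this kind that connects an \emph{arbitrary} irreducible representation of $A$ to a fibre, the fibrewise containments $(uAu^*)(t)\subseteq\mathbb K(\Hs_t)$ prove nothing about liminarity, and your argument does not close.
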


\begin{proof}
We first observe that the hypotheses imply that $A$ is separable (see \cite[Proposition 2.10]{CSSWW}, for example).  We fix an arbitrary irreducible representation $\pi\colon\ A\to {\mathbb{B}}(\Hs)$, and we must now show that $\pi(A) \subseteq {\mathbb{K}}(\Hs)$. Since irreducible representations arise from the GNS representations of pure states, there is a pure state $\phi$ on $A$ and a unit vector $\xi\in \Hs$ so that
\begin{equation}\label{eq3.1}
 \phi(a) = \langle\pi(a)\xi,\xi\rangle,\qquad a\in A.
\end{equation}
The GNS representation of any Hahn--Banach extension of $\phi$ to a state on $C^*(A,B)$ gives a representation $\sigma\colon \ C^*(A,B)\to {\mathbb{B}}(\Ks)$ where $\Hs\subseteq \Ks$ and $\pi(a)$ is the restriction to $\Hs$ of $\sigma(a)$ for each $a\in A$. Since $B$ is separable, so also is $\Ks$. 

Let $q$ denote the orthogonal projection from $\Ks$ onto $\Hs$, which lies in
$\sigma(A)'$.  Let $e$ be the central support  of $q$ in $\sigma(A)'$, which
also lies in $\overline{\sigma(A)}^{\,\mathrm{wot}}$. Now
$\overline{\sigma(A)}^{\,\mathrm{wot}}e\cong
\overline{\sigma(A)}^{\,\mathrm{wot}}q=\mathbb B(\Hs)$ and hence
$\overline{\sigma(A)}^{\,\mathrm{wot}}e$ is a type I von Neumann algebra and so
is injective. Since $\sigma(A)\subseteq_\delta\sigma(B)$, a near inclusion
version of the Kaplansky density argument of \cite[Lemma 5]{KK} (which works in
exactly the same way as the original argument) gives 
\begin{equation}
\overline{\sigma(A)}^{\,\mathrm{wot}}e\subseteq
\overline{\sigma(A)}^{\,\mathrm{wot}}\subseteq_\delta\overline{\sigma(B)}^{\,
\mathrm {wot}}
\end{equation}
Adjoining units as in \cite[Theorem 6.1]{Ch2}, it follows that 
\begin{equation}
W^*(\overline{\sigma(A)}^{\,\mathrm{wot}}e,I_\Ks)\subseteq_{2\delta}
W^*(\overline { \sigma(B)}^{\,\mathrm{wot}},I_\Ks)=\sigma(B)''.
\end{equation}
Write $R=W^*(\overline{\sigma(A)}^{\,\mathrm{wot}}e,I_\Ks)$.  Since
$2\delta<1/100$
by hypothesis, Theorem 4.3 of \cite{Ch2} (which has an implicit shared unit
hypothesis) applies to the near inclusion $R\subseteq_{2\delta}\sigma(B)''$.
Thus  there is a unitary $u\in (R\cup \sigma(B))''$ such that $uRu^* \subseteq
\sigma(B)''$ and 
\begin{equation}
\|uru^*-r\|\leq200\,\delta\,\|r\|,\qquad r\in R.
\end{equation}
Let $\tilde\sigma$ be the representation of $B$ given by
$\tilde\sigma(b)=u^*\sigma(b)u$ so that $R\subseteq \tilde\sigma(B)''$. We now
show that $\sigma(A)\subseteq_\gamma\tilde\sigma(B)$, where
$\gamma=201\delta<1$, by the choice of the bound on $\delta$. Indeed, for $a\in
A$, choose $b\in B$ with $\|a-b\|\leq \delta\|a\|$, giving the inequalities 
\begin{align}
\|\sigma(a)-\tilde{\sigma}(b)\|&\leq\|\sigma(a)-u^*\sigma(a)u\|+\|u^*\sigma(a)u-u^*\sigma(b)u\|\nonumber\\
&\leq 200\delta\|\sigma(a)\|+ \|\sigma(a)-\sigma(b)\|\leq 201\delta\|\sigma(a)\|.
\end{align}
Since $B$ is liminary, both $\tilde\sigma(B)''$ and $\tilde\sigma(B)'$ are type
I. Let $f$ be an abelian projection in $\tilde\sigma(B)'$ with central support
$I$ in this algebra. As $\tilde\sigma(B)'\subseteq R'$, the central support of
$f$ in $R'$ is also $I$, and hence $ef\neq 0$. Consider the representation
$\beta:B\rightarrow\mathbb B(f(\Ks))$ given by
$\beta(b)=\tilde\sigma(b)|_{f(\Ks)}=f\tilde\sigma(b)$ for $b\in B$.  Identifying
the abelian von Neumann algebra $\beta(B)'$ with $L^\infty(X,\mu)$ for some
measure space $(X,\mu)$, we can decompose $f(\Ks)$ as the direct integral 
\begin{equation}
f(\Ks)=\int_X^\oplus \Ks_t\,d\mu(t).
\end{equation}
In this way $\beta(B)''$ is the algebra of decomposable operators and $\beta$ decomposes as a direct integral
\begin{equation}
\beta=\int_X^\oplus\beta_t\,d\mu(t),
\end{equation}
where, after deleting a null set we may assume that each $\beta_t:B\rightarrow\mathbb B(\Ks_t)$ is irreducible so that $\beta_t(B)=\mathbb K(\Ks_t)$.

As $R\subseteq \tilde\sigma(B)''$, the representation $\alpha:A\rightarrow\mathbb B(\Ks)$ given by $\alpha(a)=\sigma(a)ef$ maps $A$ into the decomposable operators. Thus we may disintegrate $\alpha$ as 
\begin{equation}
\alpha=\int_X^\oplus \alpha_t\,d\mu(t).
\end{equation}
Similarly we decompose $ef$ as $\int_Xe_t\,d\mu(t)$. Since
$\sigma(A)\subseteq_\gamma\tilde\sigma(B)$, we have
$\alpha(A)=\sigma(A)ef\subseteq_\gamma ef\tilde\sigma(B)ef=e\beta(B)e$, noting
that  $e\beta(B)e$ is  an
operator space but not necessarily  a C$^*$-algebra because  $e$ need not
commute with $\tilde\sigma(B)$.  Then for almost all $t$, Lemma \ref{lem2.1}
(which is formulated to allow near inclusions with operator spaces) gives
\begin{equation}
\alpha_t(A)=\alpha_t(A)e_t\subseteq_\gamma e_t\mathbb K(\Ks_t)e_t=\mathbb K(e_t(\Ks_t)).
\end{equation}
As $\pi$ is irreducible, the projection $q$ is minimal in $\sigma(A)'e$ and has
central support $e$ by definition. Since $ef$ is also in $\sigma(A)'e$ with
central support $e$, comparison theory gives a partial isometry $v\in
\sigma(A)'e$ with $v^*v=q$ and $vv^*\leq fe$.  Let $\eta=v\xi$, so that
$\|\eta\|=1$ and
\begin{equation}
\phi(a)= \langle\sigma(a)q\xi,q\xi\rangle=\langle \sigma(a)ev^*\eta,v^*\eta\rangle=\langle \alpha(a)\eta,\eta\rangle,\qquad a\in A.
\end{equation}
Writing the  decomposition of $\eta$ as  $\eta=\int_X\eta_t\,d\mu(t)$, it
follows that 
\begin{equation}\label{neweq1}
\phi(a)=\int_X\langle \alpha_t(a)\eta_t,\eta_t\rangle \,d\mu(t),\qquad a\in A.
\end{equation}

Recall that a left ideal $J$ in a C$^*$-algebra $A$ is \emph{regular} if there exists $e\in A$ with $xe-x\in J$ for all $x\in A$. Consider the left ideal $J=\{a\in A:\phi(a^*a)=0\}$. Since $\phi$ is a pure
state on $A$, $J$ is a maximal regular left ideal of $A$ (see \cite[Proposition
3.13.6]{Ped}). Since this ideal is separable, we may remove a set of measure
zero to ensure that $\langle \alpha_t(a^*a)\eta_t,\eta_t\rangle=0$ for all $a\in
J$ and all $t$. It then follows that, for each $t$, the positive linear
functional $\psi_t(a)=\langle \alpha_t(a)\eta_t,\eta_t\rangle$ satisfies
$\psi_t(a^*a)=0$ for all $a\in J$.  Fix $t_0$ such that $\psi_{t_0}\neq 0$; such
a $t_0$ must exist by (\ref{neweq1}). By maximality of $J$, the
ideal $\{a\in A:\psi_{t_0}(a^*a)=0\}$ must be $J$ so that, by \cite[Proposition
3.13.6]{Ped}, (which sets out the non-unital version of  \cite[Theorem
2]{KadTrans}) $\psi_{t_0}$ is a scalar multiple of $\phi$.  Thus
there is a constant $c>0$ so that
\begin{equation}
\phi(a)=c\langle \alpha_{t_0}(a)\eta_{t_0},\eta_{t_0}\rangle,\qquad a\in A.
\end{equation}
Consider the projection $p_{t_0}$ onto $\overline{\alpha_{t_0}(A)\eta_{t_0}}$ in
$\Ks_{t_0}$.  By uniqueness of the GNS construction, $\alpha_{t_0}(\cdot)p_0$ is
equivalent to $\pi$. Since $\alpha_{t_0}(A)\subseteq\mathbb K(\Ks_{t_0})$,
it follows that $\pi(A)\subseteq \mathbb K(\Hs)$, completing the proof that $A$
is liminary.
\end{proof}

\section{Type $\mathrm{I}$ C$^*$-algebras}\label{sec4}

 In this section we consider near inclusions of C$^*$-algebras $A\subseteq_\gamma B$ where $B$ is type I. The main objective is to prove that $A$ is also of type I under suitable hypotheses. It will then follow from 
\cite[Corollary 4.4]{CSSWW} that $A$ embeds into $B$.

Recall that a positive element $x$ in a C$^*$-algebra $B$ is abelian if the norm closure of the algebra $xBx$ is commutative. We then say that $B$ is type $\mathrm{I}$ if each nonzero quotient of $B$ contains a nonzero abelian element.  A composition series for a C$^*$-algebra $B$ is a strictly increasing set of ideals $I_\alpha$ indexed by a segment $\{0\le \alpha\le\beta\}$ of the ordinals such that $I_0=0$, $I_\beta=B$, and for each limit ordinal $\gamma$, $I_\gamma$ is the norm closure of $\bigcup_{\alpha<\gamma} I_\alpha$. The connection between type~I C$^*$-algebras and liminary C$^*$-algebras is exhibited in \cite[Theorem~6.2.6]{Ped} (see also \cite{Fell}) where it is shown that $B$ is type $\mathrm{I}$ if and only if it has a composition series such that $I_{\alpha+1}/I_\alpha$ is liminary for each $\alpha<\beta$. There are many other equivalent formulations in the separable case, \cite[Theorem~6.8.7]{Ped}, many of which hold generally.
 
To make use of the composition series, we will need to consider the ideal structures of $A$ and $B$ when $A \subseteq_\gamma B$. In the related context of close C$^*$-algebras with $d(A,B) \le \gamma$, this has already been done by Phillips, \cite{Ph},  who showed that the lattices of ideals in $A$ and $B$ are isomorphic. For genuine containments $A\subseteq B$, every ideal $I$ in $B$ induces an ideal $I\cap A$ in $A$. Given a near containment $A\subseteq_\gamma B$ and an ideal $I$ in $B$, we aim to produce a corresponding ideal $J$ in $A$ which is nearly contained in $I$ so that we can represent $A/J$ and $B/I$ on the same space with $A/J$ nearly contained in $B/I$.  Our methods rely heavily on those of Phillips, but with some modifications. In particular, we will require a near containment of $B'$ in $A'$ to produce these ideals.  This was not needed in \cite{Ph} as close C$^*$-algebras have close centres. 

\begin{lem}\label{lem4.3}
Let $\gamma$ and $k$ be positive constants. Suppose that $A$ and $B$ are ${\mathrm{C}}^*$-algebras such that $A \subseteq_\gamma B$ and $A$ has property $D_k$. Given an ideal $I$ in $B$, there exists an ideal $J$ in $A$ with the following properties:
\begin{itemize}
 \item[\rm (i)] $J \subseteq_{\gamma'}I$, where $\gamma' = (4k+6)\gamma$.
\item[\rm (ii)] There exist faithful representations $\rho$ of $A/J$ and $\sigma$ of $B/I$ on the same Hilbert space $\Ks$ such that
$ \rho(A/J) \subseteq_{\gamma''} \sigma(B/I)$,
where $\gamma'' = (4k+5)\gamma$.
\item[\rm (iii)] Let $\lambda >0$ be a fixed but arbitrary constant. If $x\in A$ satisfies $d(x,I) \le \lambda\gamma\|x\|$, then $d(x,J) \le (4k+4+\lambda)\gamma\|x\|$.
\end{itemize}

\noindent Under an additional assumption, the following also holds:
\begin{itemize}
\item[\rm (iv)]
Suppose now that $(8k+10)\gamma<1$.  Let $I_1\subseteq I_2 \subseteq B$ be ideals and let $J_1$ and $J_2$ be   corresponding choices of ideals in $A$ satisfying {\rm{(i)--(iii)}}. Then $J_1 \subseteq J_2$, and these choices of ideals in $A$ are unique.
\end{itemize}
\end{lem}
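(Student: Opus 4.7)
The plan follows Phillips's strategy of using a central projection in $B''$ to pin down the ideal $J$ corresponding to $I$, modified to handle the near inclusion rather than genuine containment. First I represent $A$ and $B$ on a common Hilbert space $\Hs$ chosen so that every norm-closed ideal $I$ of $B$ satisfies $I = B \cap \overline{I}^{\,\mathrm{wot}}$; a sufficiently universal representation of $C^*(A \cup B)$ works, since every state on $B$ extends to $C^*(A \cup B)$, so every cyclic representation of $B$ appears as a summand of the restriction. In such a representation, Lemma \ref{lem2.3b} upgrades $D_k$ to $LD_{k+1}$ on $\Hs$, and \cite[Proposition 2.5]{CSSW} gives the near inclusion of commutants $B' \subseteq_{2(k+1)\gamma} A'$.

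Given $I \subseteq B$, let $z \in Z(B'')$ be the central projection with $\overline{I}^{\,\mathrm{wot}} = z B''$, so that $z \in B'$. Choose $y \in A'$ with $\|y - z\| \le 2(k+1)\gamma$, pass to its self-adjoint part, and apply functional calculus to produce a projection $p \in A'$ with $\|p - z\| \le (4k + 4)\gamma$ (this presumes $\gamma$ small enough to make the projection meaningful; for larger $\gamma$ the conclusions (i)--(iii) are trivial). Set
\[
J = \{a \in A : pa = a\},
\]
which is a norm-closed two-sided ideal of $A$ since $p \in A'$. Because $(1-p)z = (1-p)(z-p)$ and $(1-z)p = (1-z)(p-z)$, we have $\|(1-p)z\|, \|(1-z)p\| \le \|p-z\| \le (4k+4)\gamma$; these identities drive all the remaining estimates.

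For (i), given $a \in J$, pick $b_0 \in B$ with $\|a - b_0\| \le \gamma\|a\|$ and, using proximinality of ideals \cite[Prop.\ II.1.1]{HWW}, pick $b \in I$ with $\|b_0 - b\| = \|(1-z)b_0\|$; from $a = pa$ and $\|(1-z)p\| \le (4k+4)\gamma$ one bounds $\|(1-z)b_0\| \le (4k+5)\gamma\|a\|$, giving $d(a, I) \le (4k+6)\gamma\|a\|$. For (ii), define $\rho(a) = (1-p)a$ and $\sigma(b) = (1-z)b$ on $\Hs$; these are $*$-homomorphisms with kernels precisely $J$ and $I$ (the latter using the representation choice), so they descend to faithful representations of $A/J$ and $B/I$ on $\Hs$, and the near inclusion of images follows by comparing $(1-p)a$ with $(1-z)b$ for a norm-attaining lift $a$ of $\bar a \in A/J$ and $b \in B$ close to $a$. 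For (iii), use $d(x, J) = \|(1-p)x\|$ (isometry of $\rho$), split $(1-p)x = (1-p)(x-c) + (1-p)c$ for a distance-realising $c \in I$, and exploit $c = zc$ to bound $\|(1-p)c\| \le \|(1-p)z\|\|c\|$.

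Statement (iv) is then a direct consequence of (i), (iii), and Lemma \ref{lem2.2}. Given $I_1 \subseteq I_2$ with corresponding $J_1, J_2$: for $a \in J_1$, part (i) gives $d(a, I_1) \le (4k+6)\gamma\|a\|$, hence $d(a, I_2) \le (4k+6)\gamma\|a\|$; part (iii) applied to $J_2$ with $\lambda = 4k+6$ yields $d(a, J_2) \le (8k+10)\gamma\|a\| < \|a\|$ under the hypothesis, so $J_1 \subset_1 J_2$ and Lemma \ref{lem2.2}(i) forces $J_1 \subseteq J_2$. Uniqueness follows by running the same argument with $I_1 = I_2 = I$. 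The main obstacle is the representation-theoretic setup: arranging a single Hilbert space on which $I = B \cap \overline{I}^{\,\mathrm{wot}}$ holds for every ideal of $B$ simultaneously while preserving the near inclusion bound; once that is in place, the construction of $p$ and the distance estimates are routine, though the bookkeeping of constants must be done with some care.
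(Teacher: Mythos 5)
Your construction is essentially the paper's own argument in a different guise. The paper takes a representation of $B$ with kernel $I$, extends it to a representation $\tilde\pi$ of $C^*(A,B)$, approximates the summand projection $p\in\tilde\pi(B)'$ by a projection $q\in\tilde\pi(A)'$ using Lemma \ref{lem2.3b} together with \cite[Propositions 2.5 and 3.1]{CSSW}, and defines $J$ as the kernel of $x\mapsto\tilde\pi(x)q$; your central projection $1-z$ and its approximant $1-p$ play exactly the roles of the paper's $p$ and $q$, and your argument for (iv) is the paper's verbatim. Your universal-representation setup is legitimate: near inclusions are intrinsic norm statements and so survive passage to any faithful representation of $C^*(A,B)$, and since every state of $B$ extends, the restriction to $B$ contains every cyclic representation of $B$, which is what makes $I=B\cap\overline{I}^{\,\mathrm{wot}}$ hold for every closed ideal simultaneously.

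The one point you should repair is the splitting in (iii). Writing $(1-p)x=(1-p)(x-c)+(1-p)c$ and bounding $\|(1-p)c\|\le\|(1-p)z\|\,\|c\|$ forces you to pay for $\|c\|$, which can only be bounded by $(1+\lambda\gamma)\|x\|$; this yields $d(x,J)\le\bigl(4k+4+\lambda+(4k+4)\lambda\gamma\bigr)\gamma\|x\|$, strictly weaker than the stated $(4k+4+\lambda)\gamma\|x\|$. Since these constants are propagated literally into Lemma \ref{lem4.4} and Theorem \ref{thm4.5}, use instead $(1-p)x=(z-p)x+(1-z)x$ together with $\|(1-z)x\|=\|(1-z)(x-c)\|\le \|x-c\|$, so that the projection difference multiplies $x$ (whose norm you control) rather than $c$; this is exactly how the paper organises the estimate in \eqref{eq4.5}. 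The same care is needed in (ii): compare $(1-p)a$ with $(1-z)a$ first and then $(1-z)a$ with $(1-z)b$, so that $p-z$ acts on $a$ and not on $b$, recovering $(4k+5)\gamma$ exactly.
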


\begin{proof}
Take a representation $\pi$ of $B$ with kernel $I$ and extend to a representation $\tilde\pi$ of $C^*(A,B)$ on some larger Hilbert space $\Ks$, so that $\tilde\pi|_B$ contains $\pi$ as a summand. Thus there is a projection $p\in \tilde\pi(B)'$ so that $\pi  = p\tilde\pi|_Bp$. Since $A$ has property $D_k$, $\tilde\pi(A)$ has property $LD_{k+1}$ from Lemma \ref{lem2.3b}, so $\tilde\pi(B)' \subseteq_{(2k+2)\gamma} \tilde\pi(A)'$ from \cite[Proposition 2.5]{CSSW}. Thus there is a projection $q\in\tilde\pi(A)'$ such that $\|p-q\|\le (4k+4)\gamma$ from \cite[Proposition 3.1]{CSSW}. Let $J$ be the kernel of the representation $x\mapsto \tilde\pi(x)q$ for $x\in A$.

We now show that $J$ is nearly contained in $I$, and to this end consider $x\in J$, $\|x\|=1$. We may choose $y\in B$ such that $\|y-x\|\le \gamma$, since $J\subseteq_\gamma B$. Then
\begin{align}
 \|p\tilde \pi(y)\| &= \|p\tilde\pi(y) - q\tilde\pi(x)\|
\le \|p\tilde \pi(y) - p\tilde\pi(x)\| + \|p-q\|\notag\\
\label{eq4.2a}
&\le \gamma+(4k+4)\gamma.
\end{align}
Since ideals are proximinal \cite[Prop. II.1.1]{HWW}, we may choose $z\in I$ so that $\|p\tilde\pi(y)\| = \|y-z\|$. Then
\begin{equation}\label{eq4.3}
 \|x-z\| \le \|x-y\| + \|y-z\| \le 2\gamma + (4k+4)\gamma,
\end{equation}
showing that $J \subseteq_{(4k+6)\gamma}I$. This establishes (i).

The representations $a\mapsto \tilde\pi(a)q$, $a\in A$, and $b\mapsto\tilde\pi(b)p$, $b\in B$, induce faithful representations $\rho$ of $A/J$ and $\sigma$ of $B/I$ on $\Ks$. We now show that $\rho(A/J)$ is nearly contained in $\sigma(B/I)$. By proximinality of ideals we need only consider an element $x\in  A$ such that $\|x\| = \|x\|_{A/J}=1$. Then choose $y\in B$ so that $\|x-y\|\le \gamma$. It follows that
\begin{align}
 \|\tilde\pi(x)q-\tilde\pi(y)p\| &\le \|\tilde\pi(x-y)p +\tilde\pi(x)(q-p)\|\notag\\
\label{eq4.3a}
&\le \gamma+(4k+4)\gamma,
\end{align}
showing that $\rho(A) \subseteq_{(4k+5)\gamma}\sigma(B)$. This proves (ii).

Now fix $\lambda>0$ and consider $x\in A$ such that $d(x,I) \le \lambda\gamma\|x\|$. Given $\vp>0$, there exists $i\in I$ such that $\|x-i\|< (\lambda+\vp)\gamma\|x\|$. It follows that
\begin{equation}\label{eq4.4}
 \|\tilde\pi(x)p-\tilde\pi(i)p\| < (\lambda+\vp)\gamma\|x\|.
\end{equation}
Since $\tilde\pi(i)p=0$, we obtain $\|\tilde\pi(x)p\| < (\lambda+\vp)\gamma\|x\|$, so
\begin{align}
 \|\tilde\pi(x)q\| &\le \|\tilde\pi(x)(p-q)\| + \|\tilde\pi(x)p\|
< (4k+4)\gamma\|x\| + (\lambda+\vp)\gamma\|x\|\notag\\
\label{eq4.5}
&= (4k+4+\lambda+\vp)\gamma\|x\|.
\end{align}
Now $\|\tilde\pi(x)q\| = d(x,J)$, and (iii) follows from \eqref{eq4.5} since $\vp>0$ was arbitrary.

For the fourth part, we now make the additional assumption that $(8k+10)\gamma<1$.
Consider ideals $I_1 \subseteq I_2 \subseteq B$, and let $J_1$ and $J_2$ be ideals in $A$ so that the pairs $(I_1,J_1)$ and $(I_2,J_2)$ satisfy (i)--(iii). Let $j_1\in J_1$, $\|j_1\| =1$. From (i) we may choose $i_1\in I_1$ with $\|i_1-j_1\|\le (4k+6)\gamma = (4k+6)\gamma\|j_1\|$. Since $i_1\in I_2$, this gives $d(j_1,I_2) \le (4k+6)\gamma\|j_1\|$, and it follows from (iii) with $\lambda=4k+6$ that $d(j_1,J_2) \le (8k+10)\gamma<1$. Thus $J_1\subseteq_{(8k+10)\gamma} J_2$, and the containment $J_1\subseteq J_2$ follows from Lemma \ref{lem2.2} (i).

Finally suppose that $I$ is an ideal in $B$ so that there are two ideals $J_1$ and $J_2$ in $A$ satisfying (i)--(iii). The last argument, with $I_1=I_2=I$, gives $J_1\subseteq J_2$ and $J_2\subseteq J_1$, proving uniqueness of the choice of ideal in $A$. This establishes (iv).
\end{proof}

The following lemma will be used to transfer composition series between the algebras of a near inclusion.

\begin{lem}\label{lem4.4}
Let $\vp<1$ be a fixed positive constant. Let $\gamma$ and $k$ be positive constants satisfying
\begin{equation}\label{eq4.6}
 (16k^2+52k+40)\gamma \leq \vp.
\end{equation}
Let $A$ and $B$ be ${\mathrm{C}}^*$-algebras with $A \subseteq_\gamma B$, and suppose that $A$ has property $D_k$. Let $I_1 \subseteq I_2 \subseteq B$ be ideals and let $J_1 \subseteq J_2\subseteq A$ be the ideals constructed in Lemma~\ref{lem4.3} (iv), noting that $(8k+10)\gamma<1$. Then there is a Hilbert space $\Ks$ and faithful representations $\sigma\colon \ I_2/I_1\to {\mathbb{B}}(\Ks)$ and $\rho\colon  \ J_2/J_1\to {\mathbb{B}}(\Ks)$ such that
\begin{equation}
 \rho(J_2/J_1) \subseteq_\vp \sigma(I_2/I_1).
\end{equation}
\end{lem}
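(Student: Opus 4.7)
The plan is to apply Lemma~\ref{lem4.3} only once, namely to the pair $A\subseteq_\gamma B$ with the \emph{smaller} ideal $I_1$, and then to restrict the resulting faithful representations of $A/J_1$ and $B/I_1$ to the C$^*$-subalgebras $J_2/J_1$ and $I_2/I_1$. Specifically, Lemma~\ref{lem4.3}(ii) applied to $I_1$ yields faithful representations $\rho_1\colon A/J_1 \to \mathbb{B}(\Ks)$ and $\sigma_1\colon B/I_1 \to \mathbb{B}(\Ks)$ on a common Hilbert space $\Ks$ with $\rho_1(A/J_1) \subseteq_{(4k+5)\gamma} \sigma_1(B/I_1)$. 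Taking $\rho$ and $\sigma$ to be the restrictions of $\rho_1$ to $J_2/J_1$ and of $\sigma_1$ to $I_2/I_1$ gives the required faithful representations, and the only remaining task is to verify the near inclusion with constant $\vp$.

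For this, fix $\bar x\in J_2/J_1$ of norm one. Since $J_1$ is a proximinal ideal, I may choose a representative $x\in J_2$ with $\|x\|_A = \|\bar x\|_{A/J_1}=1$. Next, Lemma~\ref{lem4.3}(i) applied to the inclusion $A\subseteq_\gamma B$ with ideal $I_2$ (which produced $J_2$) gives $J_2\subseteq_{(4k+6)\gamma}I_2$, so there exists $y\in I_2$ with $\|x-y\|\leq(4k+6)\gamma$; set $\bar y = y+I_1\in I_2/I_1$.

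To bound $\|\rho(\bar x)-\sigma(\bar y)\|$ I will invoke the concrete form of the representations from the proof of Lemma~\ref{lem4.3}(ii): there is a representation $\tilde\pi$ of $C^*(A,B)$ on $\Ks$ and projections $p\in\tilde\pi(B)'$, $q\in\tilde\pi(A)'$ with $\|p-q\|\leq(4k+4)\gamma$ such that $\rho_1(a+J_1)=\tilde\pi(a)q$ and $\sigma_1(b+I_1)=\tilde\pi(b)p$ for $a\in A$, $b\in B$. A triangle inequality then gives
\begin{equation*}
\|\rho(\bar x)-\sigma(\bar y)\| = \|\tilde\pi(x)q-\tilde\pi(y)p\| \leq \|x\|\|p-q\|+\|x-y\| \leq (8k+10)\gamma.
\end{equation*}
Since the hypothesis forces $(8k+10)\gamma\leq(16k^2+52k+40)\gamma\leq\vp$, this establishes $\rho(J_2/J_1)\subseteq_\vp\sigma(I_2/I_1)$.

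I do not anticipate a substantial obstacle: the argument is entirely a matter of bookkeeping with ideals and proximinal representatives, together with the concrete description of the representations built in Lemma~\ref{lem4.3}. The only delicate point is coordinating the normalization of the lift $x\in J_2$ of $\bar x$ (via proximinality of $J_1$) with the choice of nearby element $y\in I_2$ supplied by Lemma~\ref{lem4.3}(i) for the ideal $I_2$; once this is in place, the triangle inequality using $p$ and $q$ closes the proof with generous room to spare inside the hypothesized bound.
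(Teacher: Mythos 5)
Your argument is correct, but it is genuinely different from the paper's. The paper applies Lemma~\ref{lem4.3} a \emph{second} time, to the near inclusion $J_2\subseteq_{\gamma'}I_2$ (with $\gamma'=(4k+6)\gamma$) and the ideal $I_1\subseteq I_2$ -- which requires first noting that $J_2$ inherits property $D_k$ via Lemma~\ref{lem4.1}(ii) -- and then must prove that the ideal $\widetilde J_1\subseteq J_2$ produced by this second application coincides with the original $J_1$; that identification, carried out with parts (i) and (iii) of Lemma~\ref{lem4.3} and two invocations of Lemma~\ref{lem2.2}(i), is exactly where the quadratic constant $(16k^2+52k+40)\gamma$ enters. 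You instead apply Lemma~\ref{lem4.3} once, to $A\subseteq_\gamma B$ with the ideal $I_1$, and restrict the resulting faithful representations to the ideals $J_2/J_1\subseteq A/J_1$ and $I_2/I_1\subseteq B/I_1$; the only extra input is Lemma~\ref{lem4.3}(i) for $I_2$ to supply the approximant $y\in I_2$, plus the concrete description $\rho_1(a+J_1)=\tilde\pi(a)q$, $\sigma_1(b+I_1)=\tilde\pi(b)p$ with $\|p-q\|\le(4k+4)\gamma$ from the proof of Lemma~\ref{lem4.3}. Your bookkeeping is sound: proximinality lets you take the lift $x\in J_2$ with $\|x\|=\|\bar x\|$ (the nearest point of $J_1$ stays inside $J_2$), the restrictions remain faithful, and the triangle inequality gives $(8k+10)\gamma\le(16k^2+52k+40)\gamma\le\vp$. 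The trade-off is that you must reach inside the proof of Lemma~\ref{lem4.3} rather than quote only its statement -- the statement of (ii) alone does not let you control $\|\rho_1(\bar x)-\sigma_1(\bar y)\|$ for your specific choice of $\bar y$ -- whereas the paper's route reuses Lemma~\ref{lem4.3} as a black box at the cost of the $\widetilde J_1=J_1$ verification and a worse constant. Your version actually yields the sharper estimate $\rho(J_2/J_1)\subseteq_{(8k+10)\gamma}\sigma(I_2/I_1)$.
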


\begin{proof}
From Lemma~\ref{lem4.3} (i), $J_2 \subseteq_{\gamma'}I_2$ where $\gamma' = (4k+6)\gamma$. Since $J_2$ is an ideal in $A$, Lemma~\ref{lem4.1}~(ii) shows that $J_2$ has property $D_{k}$. Thus we may apply Lemma~\ref{lem4.3} to $J_2 \subseteq_{\gamma'} I_2$ and the ideal $I_1 \subseteq I_2$ with $\gamma$ replaced by $\gamma'$  to obtain an ideal $\widetilde J_1 \subseteq J_2 \subseteq A$ satisfying conditions (i)--(iii) with the following changes of constants:
\begin{itemize}
 \item[(I)] $\widetilde J_1 \subseteq_\delta I_1$ where $\delta=  (4k+6)\gamma'$.
\item[(II)] There exist faithful representations $\rho$ of $J_2/\widetilde J_1$ and $\sigma$ of $I_2/I_1$ on the same Hilbert space $\Ks$ such that 
 $\rho(J_2/\widetilde J_1) \subseteq_{\delta'} \sigma(I_2/I_1)$
where $\delta' = (4k+5)\gamma'$.
\item[(III)] Let $\lambda>0$ be a fixed but arbitrary constant. If $x\in J_2$ satisfies $d(x,I_1) \le \lambda\gamma'\|x\|$, then
\begin{equation}\label{eq4.7}
 d(x,\widetilde J_1) \le (4k+4+\lambda)\gamma'\|x\|.
\end{equation}
\end{itemize}

The result will follow from (II) provided that we can show that $\widetilde J_1=J_1$. To this end, consider $x\in J_1 \subseteq J_2$ with $\|x\|=1$. From Lemma~\ref{lem4.3} (i), 
\begin{equation}\label{eq4.8}
 d(x,I_1) \le (4k+6)\gamma = (4k+6)\gamma\|x\|.
\end{equation}
Taking $\lambda = (4k+6)\gamma/\gamma'$ in (III), we obtain
\begin{align}
 d(x,\widetilde J_1) &\le (4k+4 + (4k+6)\gamma/\gamma')\gamma'\notag\\
&=  (4k+6)(4k+5)\gamma\notag\\
\label{eq4.9}
&= (16k^2 + 44k +30)\gamma<\vp<1.
\end{align}
Lemma \ref{lem2.2} (i) then gives $J_1\subseteq \widetilde J_1$.

Now consider $x\in\widetilde J_1$, $\|x\|=1$. From (I), 
\begin{equation}\label{eq4.10}
 d(x,I_1) \le (4k+6)\gamma' = (4k+6)^2\gamma\|x\|.
\end{equation}
From Lemma \ref{lem4.3} (iii) with $\lambda =  (4k+6)^2$, we see that
\begin{align}
 d(x,J_1) &\le (4k+4+(4k+6)^2)\gamma\notag\\
\label{eq4.11}
&= (16k^2 + 52k +40)\gamma\leq \vp < 1.
\end{align}
Lemma \ref{lem2.2} (i) then gives the reverse containment  $\widetilde J_1\subseteq  J_1$. This proves that $J_1=\widetilde J_1$, and it now follows from (II) that there are faithful representations $\rho$ of $J_2/J_1$ and $\sigma$ of $I_2/I_1$ on a Hilbert space $\Ks$ such that
\begin{equation}
 \rho(J_1/J_1) \subseteq_{(16k^2+44k+30)\gamma} \sigma(I_2/I_1).
\end{equation}
Since $(16k^2+44k+30)\gamma<(16k^2+52k+40)\gamma\leq \vp$, the result follows.
\end{proof}

We now come to the main result of the paper. The inequality in the hypotheses is to ensure that applications of Theorem~\ref{thm3.1}, Lemma~\ref{lem4.3}, and Lemma~\ref{lem4.4} are valid.

\begin{thm}\label{thm4.5}
Let $k$ and $\gamma$ be positive constants satisfying
\begin{equation}
 (16k^2+52k+40)\gamma <  1/201.
\end{equation}
Let $B$ be a separable type {\rm I} ${\mathrm{C}}^*$-algebra and let $A$ be a ${\mathrm{C}}^*$-algebra such that $A$ has property $D_k$ and $A\subseteq_\gamma B$. Then $A$ is a type {\rm I} ${\mathrm{C}}^*$-algebra.
\end{thm}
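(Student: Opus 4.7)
The plan is to transfer a composition series from $B$ to $A$ via the ideal correspondence of Lemma~\ref{lem4.3}, and then use Lemma~\ref{lem4.4} together with Theorem~\ref{thm3.1} to control the subquotients. By \cite[Theorem~6.2.6]{Ped}, the separable type~I algebra $B$ admits a composition series $(I_\alpha)_{0\le\alpha\le\beta}$ whose successive quotients $I_{\alpha+1}/I_\alpha$ are all liminary. The numerical hypothesis gives in particular $(8k+10)\gamma<1$, so Lemma~\ref{lem4.3}(iv) applies. For each $\alpha$ I take $J_\alpha$ to be the unique ideal of $A$ associated to $I_\alpha$ by Lemma~\ref{lem4.3}, and the uniqueness clause in Lemma~\ref{lem4.3}(iv) guarantees that $\alpha\mapsto J_\alpha$ is order-preserving.

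The first task is to verify that $(J_\alpha)_{0\le\alpha\le\beta}$ is a composition series of $A$. The endpoints are easy: $I_0=0$ forces $J_0\subset_1 0$ via Lemma~\ref{lem4.3}(i) so that $J_0=0$ by Lemma~\ref{lem2.2}(i), while $A\subseteq_\gamma B=I_\beta$ combined with Lemma~\ref{lem4.3}(iii) at $\lambda=1$ yields $A\subseteq_{(4k+5)\gamma}J_\beta$, and hence $J_\beta=A$. The main obstacle is the behaviour at a limit ordinal $\lambda\le\beta$, where I must show that $J_\lambda$ coincides with $\widetilde J:=\overline{\bigcup_{\alpha<\lambda}J_\alpha}$. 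Monotonicity gives $\widetilde J\subseteq J_\lambda$; for the reverse, I take $x\in J_\lambda$ with $\|x\|=1$. By Lemma~\ref{lem4.3}(i) and density of $\bigcup_{\alpha<\lambda}I_\alpha$ in $I_\lambda$, for any $\varepsilon>0$ there exist $\alpha_0<\lambda$ and $i\in I_{\alpha_0}$ with $\|x-i\|\le(4k+6)\gamma+\varepsilon$. Applying Lemma~\ref{lem4.3}(iii) to the pair $(I_{\alpha_0},J_{\alpha_0})$ with parameter $(4k+6)+\varepsilon/\gamma$ then bounds $d(x,J_{\alpha_0})$ by $(8k+10)\gamma+\varepsilon$. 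Since $J_{\alpha_0}\subseteq\widetilde J$ and $\varepsilon$ is arbitrary, this gives $J_\lambda\subset_1\widetilde J$, so $J_\lambda\subseteq\widetilde J$ by Lemma~\ref{lem2.2}(i) applied to the C$^*$-subalgebra $J_\lambda$ of $A$.

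With the composition series $(J_\alpha)$ in place, the remainder is a clean assembly. For each $\alpha<\beta$, I apply Lemma~\ref{lem4.4} to the pair $I_\alpha\subseteq I_{\alpha+1}$ and the corresponding $J_\alpha\subseteq J_{\alpha+1}$, with the choice $\varepsilon:=(16k^2+52k+40)\gamma<1/201$ permitted by the hypothesis. This produces faithful representations $\rho$ of $J_{\alpha+1}/J_\alpha$ and $\sigma$ of $I_{\alpha+1}/I_\alpha$ on a common Hilbert space such that $\rho(J_{\alpha+1}/J_\alpha)\subseteq_\varepsilon\sigma(I_{\alpha+1}/I_\alpha)$ with $\varepsilon<1/201$. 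Because $I_{\alpha+1}/I_\alpha$ is liminary and separable (being a subquotient of $B$), Theorem~\ref{thm3.1} gives that $J_{\alpha+1}/J_\alpha$ is liminary. Thus $(J_\alpha)_{0\le\alpha\le\beta}$ is a composition series for $A$ whose subquotients are liminary, and $A$ is type~I by the converse direction of \cite[Theorem~6.2.6]{Ped}. Aside from bookkeeping, the only genuine difficulty is the limit-ordinal step above, where density of $\bigcup_{\alpha<\lambda}I_\alpha$ in $I_\lambda$ has to be combined carefully with the distance estimate of Lemma~\ref{lem4.3}(iii) in order to apply Lemma~\ref{lem2.2}(i).
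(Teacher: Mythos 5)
Your proposal is correct and follows essentially the same route as the paper: transfer the composition series via Lemma \ref{lem4.3}(iv), identify the limit-ordinal ideals using Lemma \ref{lem4.3}(i),(iii) together with Lemma \ref{lem2.2}(i), and make the subquotients liminary via Lemma \ref{lem4.4} and Theorem \ref{thm3.1}. The only (harmless) deviations are your explicit verification that $J_\beta=A$ via Lemma \ref{lem4.3}(iii) with $\lambda=1$, where the paper simply invokes uniqueness, and your use of an arbitrary $\varepsilon$ in the limit-ordinal estimate where the paper fixes an error of $\gamma$.
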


\begin{proof}
From the discussion of type I C$^*$-algebras at the beginning of this section,
$B$ has a composition series $I_\alpha$, $0\le \alpha\le\beta$, where $I_0=0$,
$I_\beta=B$, and for each limit ordinal $\alpha$, $I_\alpha$ is the norm closure
of $\bigcup_{\alpha'<\alpha} I_{\alpha'}$. Moreover, each quotient
$I_{\alpha+1}/I_\alpha$ is liminary. Since $(8k+10)\gamma<1$, the ideals
$J_\alpha \subseteq A$ constructed from $I_\alpha\subseteq B$ in
Lemma~\ref{lem4.3} are nested, and $J_\beta=A$ from the uniqueness of each
$J_\alpha$. Applying Lemma~\ref{lem4.4} with $\vp =16k^2+52k+40< 1/201<1$, there
exist faithful representations $\rho_\alpha$ of $J_{\alpha+1}/J_\alpha$ and
$\sigma_\alpha$ of $I_{\alpha+1}/I_\alpha$ on the same Hilbert space so that
\begin{equation}
 \rho_\alpha(J_{\alpha+1}/J_\alpha) \subseteq_\vp \sigma_\alpha (I_{\alpha+1}/I_\alpha).
\end{equation}
Since $I_{\alpha+1}/I_\alpha$ is separable and liminary, it follows from Theorem~\ref{thm3.1} that each quotient $J_{\alpha+1}/J_\alpha$ is also liminary.

For a fixed limit ordinal $\alpha$, let $J = \ovl{\bigcup_{\alpha'<\alpha}J_{\alpha'}}$. Then $J \subseteq J_\alpha$, and we must establish the reverse inclusion. Consider $x\in J_\alpha$ with $\|x\|=1$. Then there exists $y\in I_\alpha$ such that $\|x-y\| \le (4k+6)\gamma$ from Lemma \ref{lem4.3} (i). At the cost of an error of say $\gamma$, we may assume that $y$ lies in some $I_{\alpha'}$ for $\alpha'<\alpha$, so that $\|x-y\| \le (4k+7)\gamma$. Applying Lemma~\ref{lem4.3} (iii) with $\lambda=(4k+7)$, we obtain $d(x,J)\le d(x,J_{\alpha'}) \le (8k+11)\gamma <  10^{-3}$. The containment $J_\alpha\subseteq J$ follows from Lemma \ref{lem2.2} (i), proving equality. After deleting any duplicates from the list, $\{J_\alpha\colon \ 0\le\alpha \le \beta\}$ is a composition series for $A$ with liminary quotients.
We have now proved that $A$ is type I.
\end{proof}

\begin{cor}\label{cor4.5}
Let $k$ and $\gamma$ be positive constants satisfying
\begin{equation}
 (16k^2+52k+40)\gamma <  1/201,\ \ \gamma<1/420000.
\end{equation}
Let $B$ be a separable type {\rm I} ${\mathrm{C}}^*$-algebra and let $A$ be a ${\mathrm{C}}^*$-algebra such that $A$ has property $D_k$ and $A\subseteq_\gamma B$. Then $A$
embeds into $B$.
\end{cor}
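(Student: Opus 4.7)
The plan is to combine Theorem~\ref{thm4.5} with the embedding theorem for near inclusions of separable nuclear C$^*$-algebras. The first hypothesis $(16k^2+52k+40)\gamma<1/201$ is exactly what is required to invoke Theorem~\ref{thm4.5}, and the second hypothesis $\gamma<1/420000$ is the numerical threshold under which \cite[Corollary~4.4]{CSSWW} delivers an actual embedding from a near inclusion whose ``smaller'' algebra is separable and nuclear.

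More precisely, I would first note that $A$ is separable: this is recorded at the start of the proof of Theorem~\ref{thm3.1} as a consequence of \cite[Proposition~2.10]{CSSWW} applied to a near inclusion into a separable C$^*$-algebra (with $\gamma<1$, which is clear from the numerical hypothesis). Next, Theorem~\ref{thm4.5} applies verbatim under the first inequality to show that $A$ is a type~I C$^*$-algebra. Since every type~I C$^*$-algebra is nuclear, we thus have a near inclusion $A\subseteq_\gamma B$ of separable nuclear C$^*$-algebras.

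At this stage the corollary is a direct appeal to \cite[Corollary~4.4]{CSSWW}, whose hypothesis is a near inclusion $A\subseteq_\gamma B$ of a separable nuclear C$^*$-algebra into another C$^*$-algebra with $\gamma$ below the stated threshold $1/420000$; the conclusion is the existence of a $*$-monomorphism $A\hookrightarrow B$. Both of the numerical requirements in the statement of the corollary are in force, so this final step is immediate and yields the required embedding.

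The only real content is Theorem~\ref{thm4.5}; the rest is bookkeeping to align the two separate numerical thresholds and to verify that the hypotheses of the CSSWW embedding theorem are met. There is no genuine obstacle beyond ensuring that the constants have been chosen so that both Theorem~\ref{thm4.5} and \cite[Corollary~4.4]{CSSWW} can be applied simultaneously, which is precisely why the two inequalities appear jointly in the statement.
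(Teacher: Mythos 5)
Your proposal is correct and follows exactly the paper's own argument: Theorem~\ref{thm4.5} gives that $A$ is type~I, hence nuclear, and then \cite[Corollary 4.4]{CSSWW} (for which the hypothesis $\gamma<1/420000$ is imposed) supplies the embedding. The extra remarks on separability and on aligning the two numerical thresholds are accurate but not different in substance from what the paper does.
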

\begin{proof}
From Theorem \ref{thm4.5}, $A$ is a type I C$^*$-algebra and so it is nuclear. The result now follows from \cite[Corollary 4.4]{CSSWW}.
\end{proof}

The similarity problem asks whether every representation of a C$^*$-algebra is similar to a $*$-representation. This is an open question which is equivalent to the existence of a constant $k_0$ so that every C$^*$-algebra has property $D_{k_0}$. This would allow us to remove the property $D_k$ hypothesis from Theorem~\ref{thm4.5}. We now discuss another situation where this is possible.

It is also natural to consider closeness and near inclusions in the completely bounded sense (see the discussion at the end of Section 4 of \cite{CSSW}). Specifically, we say that $d_{{\mathrm{cb}}}(A,B) = d(A\otimes {\mathbb{K}}(\Hs), B\otimes {\mathbb{K}}(\Hs))$, and define a cb-near inclusion  $A \subseteq_{{\mathrm{cb}},\gamma} B$ to mean $A\otimes {\mathbb{K}}(\Hs)\subseteq_\gamma B\otimes {\mathbb{K}}(\Hs)$ where $\Hs$ is a separable infinite dimensional Hilbert space. In this setting we have the following.

\begin{thm}\label{thm4.6}
Let $\gamma < 1/420000$, let $B$ be a separable type {\rm I} ${\mathrm{C}}^*$-algebra, and let $A$ be a ${\mathrm{C}}^*$-algebra such that $A \subseteq_{{\mathrm{cb}},\gamma} B$. Then $A$ is type {\rm I} and embeds into $B$.
\end{thm}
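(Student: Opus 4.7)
The plan is to reduce Theorem~\ref{thm4.6} to Theorem~\ref{thm4.5} and Corollary~\ref{cor4.5}, applied to the near inclusion $A\otimes\mathbb K(\mathcal H)\subseteq_\gamma B\otimes\mathbb K(\mathcal H)$ that is built into the cb-near inclusion hypothesis. The key observation is that Lemma~\ref{lem2.2a}(ii) automatically supplies the required distance property on $A\otimes\mathbb K(\mathcal H)$, so no similarity-type assumption on $A$ itself is needed: tensoring with the compacts manufactures the hypothesis for free.

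For the type I conclusion, I would apply Theorem~\ref{thm4.5} with $k=3/2$. Note that $B\otimes\mathbb K(\mathcal H)$ is both separable and type I, since these properties are preserved under tensoring with $\mathbb K(\mathcal H)$. With $k=3/2$ we have $16k^2+52k+40 = 36+78+40 = 154$, and the bound $\gamma<1/420000$ forces $154\gamma<1/201$ (since $154\cdot 201 = 30954<420000$). Thus Theorem~\ref{thm4.5} applies and yields that $A\otimes\mathbb K(\mathcal H)$ is type I. Since $A$ sits as a C$^*$-subalgebra of $A\otimes\mathbb K(\mathcal H)$ via $a\mapsto a\otimes p$ for any fixed rank-one projection $p$, and type I passes to C$^*$-subalgebras, $A$ is type I. Separability of $A$ follows by first applying \cite[Proposition~2.10]{CSSWW} to the cb-near inclusion to see that $A\otimes\mathbb K(\mathcal H)$ is separable.

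For the embedding of $A$ into $B$, I would extract an ordinary near inclusion $A\subseteq_\gamma B$ from the cb-hypothesis by compression. Suppose $A,B\subseteq\mathbb B(\mathcal K)$, so that $A\otimes\mathbb K(\mathcal H),B\otimes\mathbb K(\mathcal H)\subseteq\mathbb B(\mathcal K\otimes\mathcal H)$. Fix a rank-one projection $p\in\mathbb K(\mathcal H)$ and compress by $1\otimes p$; after identifying $A\otimes\mathbb Cp\cong A$ and $B\otimes\mathbb Cp\cong B$ on $\mathcal K\otimes p\mathcal H\cong\mathcal K$, the compression of any element of $B\otimes\mathbb K(\mathcal H)$ lies in $B\otimes\mathbb Cp\cong B$. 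Thus, given $a\in A$, picking $y\in B\otimes\mathbb K(\mathcal H)$ with $\|a\otimes p-y\|\leq\gamma\|a\|$ and cutting by $1\otimes p$ on both sides produces $b\in B$ with $\|a-b\|\leq\gamma\|a\|$. Hence $A\subseteq_\gamma B$ on $\mathcal K$.

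Finally, $A$ is separable and nuclear (both from being type I) and satisfies $A\subseteq_\gamma B$ with $\gamma<1/420000$, so \cite[Corollary~4.4]{CSSWW} applies exactly as in the proof of Corollary~\ref{cor4.5} to produce the desired embedding $A\hookrightarrow B$. There is no substantive obstacle in this plan: the entire content lies in recognizing that the cb-hypothesis is exactly the correct strengthening to invoke Lemma~\ref{lem2.2a}(ii), after which the compression trick recovers the ordinary near inclusion needed for \cite[Corollary~4.4]{CSSWW}.
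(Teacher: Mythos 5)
Your proposal is correct and follows essentially the same route as the paper: invoke Lemma \ref{lem2.2a}(ii) to get property $D_{3/2}$ for $A\otimes\mathbb{K}(\Hs)$, verify the constant $154\gamma<1/201$, apply Theorem \ref{thm4.5}, and finish with \cite[Corollary 4.4]{CSSWW}. The only (harmless) differences are that you deduce type I of $A$ from its embedding into $A\otimes\mathbb{K}(\Hs)$ rather than from the final embedding into $B$, and you spell out the compression by $1\otimes p$ to recover the ordinary near inclusion $A\subseteq_\gamma B$, a detail the paper leaves implicit.
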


\begin{proof}
By definition, we have $A\otimes {\mathbb{K}}(\Hs) \subseteq_\gamma B\otimes {\mathbb{K}}(\Hs)$. By part (ii) of Lemma~\ref{lem2.2a}, $A \otimes {\mathbb{K}}(\Hs)$ has property $D_{3/2}$, so direct calculation shows that the hypotheses of Theorem~\ref{thm4.5} are satisfied with $k=3/2$. Thus $A\otimes {\mathbb{K}}(\Hs)$ is type I and so $A$ is nuclear. It follows from \cite[Corollary 4.4]{CSSWW} that $A$ embeds into $B$, whereupon we also see that it is type I.
\end{proof}

We conclude with some remarks on the paper \cite{J} by Johnson and the
connections to our results. There the author considers near containments
$A\subseteq_{\gamma} B$ where $B$ is a separable $n$-subhomogeneous
C$^*$-algebra (which is automatically type I). For suitably small $\gamma$,
$A$ is also $n$-subhomogeneous, so an embedding of $A$ into $B$ follows from
Corollary \ref{cor4.5} since nuclear C$^*$-algebras have property $D_1$.
Johnson obtains such an embedding, but also shows that it can be achieved
with conjugation by a unitary $u$ satisfying $\|u-I\|\leq f(\gamma, n)$,
where $f$ is a function satisfying $\lim_{\gamma\to 0}f(\gamma, n)=0$ for
each fixed value of $n$. Such a result cannot hold in the type I or even liminary situation due to the examples of $c_0\subseteq_{\gamma} C[0,1]\otimes \mathbb{K}(\Hs)$ presented in \cite{Johnson.PerturbationExample}. It would be interesting to characterise exactly which near inclusions can be spatially implemented by a unitary close to $I$.


\begin{thebibliography}{10}
\bibitem[A]{Ar}
W.~B.~Arveson.
\newblock Interpolation problems in nest algebras.
\newblock {\em J. Funct. Anal.}, 20(3):208--233, 1975.

\bibitem[ChC]{Christensen.CounterExamples}
M.-D.~Choi and E.~Christensen.
\newblock Completely order isomorphic and close C$^*$-algebras need not be $^*$-isomorphic.
\newblock {\em Bull. London Math. Soc.}, 15:604-610, 1983.

\bibitem[C1]{Ch-P1}
E.~Christensen.
\newblock Perturbations of operator algebras.
\newblock {\em Invent. Math.}, 43 (1977), 1--13.

\bibitem[C2]{Ch}
E.~Christensen.
\newblock Perturbations of operator algebras II.
\newblock {\em Indiana Univ. Math. J.}, 26:891--904, 1977.

\bibitem[C3]{Ch2}
E.~Christensen.
\newblock Near inclusions of {$C\sp{\ast} $}-algebras.
\newblock {\em Acta Math.}, 144(3-4):249--265, 1980.


\bibitem[CSSW]{CSSW}
E.~Christensen, A.~M.~Sinclair, R.~R.~Smith and S.~A.~White.
\newblock Perturbations of C$^*$-algebraic invariants.
\newblock {\em Geom. Funct. Anal.}, 20:368-397, 2010.

\bibitem[CSSWW1]{CSSWWPNAS}
E.~Christensen, A.~M.~Sinclair, R.~R.~Smith, S.~A.~White and W.~Winter.
\newblock The spatial isomorphism problem for close separable nuclear
              {C$^*$}-algebras.
\newblock {\em Proc. Natl. Acad. Sci. USA}, 107:587-591, 2010.

\bibitem[CSSWW2]{CSSWW}
E.~Christensen, A.~M.~Sinclair, R.~R.~Smith, S.~A.~White and W.~Winter.
\newblock Perturbations of Nuclear C$^*$-algebras. {\em Acta Math.}, to appear.  arXiv:0910.4953v1, 2009.

\bibitem[D]{Dix}
J.~Dixmier.
\newblock C$^*$-{\em{algebras}}. North Holland, Amsterdam, 1977.

\bibitem[F]{Fell}
J.~M.~G.~Fell.
\newblock The structure of algebras of operator fields.
\newblock {\em Acta Math.}, 106:233-280, 1961.

\bibitem[HWW]{HWW}
P.~Harmand, D.~Werner and W.~Werner.
\newblock {\em M-ideals in Banach spaces and Banach algebras}. Lecture Notes in Mathematics Vol. 1547, Springer-Verlag, Berlin, 1993.

\bibitem[HKW]{HKW}
I.~Hirshberg, E.~Kirchberg and S.~White.
\newblock Decomposable approximations of nuclear C$^*$-algebras, 2011.
\newblock CRM Preprint 1028, \texttt{http://www.crm.es/Publications/11/Pr1028.pdf}.

\bibitem[J1]{Johnson.P}
B.~E.~Johnson.
\newblock Perturbations of Banach algebras.
\newblock {\em Proc. London. Math. Soc. (3)}, 34:439--458, 1977.

\bibitem[J2]{Johnson.PerturbationExample}
B.~E.~Johnson.
\newblock A counterexample in the perturbation theory of C$^*$-algebras. 
\newblock {\em Canad. Math. Bull.}, 25:311-316, 1982. 


\bibitem[J3]{J}
B.~E.~Johnson.
\newblock Near inclusions for subhomogeneous {$C\sp *$}-algebras.
\newblock {\em Proc. London Math. Soc. (3)}, 68(2):399--422, 1994.

\bibitem[K]{KadTrans}
R.~V.~Kadison
\newblock Irreducible operator algebras. 
\newblock {\em Proc. Natl. Acad. Sci. U.S.A.}, 43:273--276, 1957. 

\bibitem[KK]{KK}
R.~V.~Kadison and D.~Kastler.
\newblock Perturbations of von {N}eumann algebras. {I}. {S}tability of type.
\newblock {\em Amer. J. Math.}, 94:38--54, 1972.


\bibitem[KR]{KR2}
R.~V.~Kadison and J.~Ringrose.
\newblock {\em Fundamentals of the theory of operator algebras. {V}ol. {II}}.
\newblock Academic Press, Orlando Florida, 1986.

\bibitem[Ki]{Ki}
E.~Kirchberg.
\newblock The derivation problem and the similarity problem are equivalent.
\newblock {\em J. Operator Theory}, 36(1):59--62, 1996.

\bibitem[Pe]{Ped}
G.~K.~Pedersen.
\newblock  ${\mathrm{C}}^*${\em{-algebras and their automorphism groups}}.
\newblock Academic Press, New York, 1979.

\bibitem[Ph]{Ph}
J.~Phillips.
\newblock Perturbations of {$C\sp{\ast} $}-algebras.
\newblock {\em Indiana Univ. Math. J.}, 23:1167--1176, 1973/74.

\bibitem[PhR1]{Phillips.PerturbationAF}
J.~Phillips and I.~Raeburn.
\newblock Perturbations of {AF}-algebras.
\newblock {\em Canad. J. Math.}, 31(5):1012--1016, 1979.

\bibitem[PhR2]{Phillips.Perturbations2}
J.~Phillips and I.~Raeburn.
\newblock Perturbations of {$C\sp{\ast} $}-algebras. {II}.
\newblock {\em Proc. London Math. Soc. (3)}, 43(1):46--72, 1981.


\bibitem[RT]{RT}
I.~Raeburn and J.~L.~Taylor.
\newblock Hochschild cohomology and perturbations of Banach algebras. 
\newblock {\em J. Funct. Anal.}, 25:258-266, 1977.

\end{thebibliography}
\end{document}